\theoremstyle{plain}
\newtheorem*{theorem*}{Theorem}
\newtheorem{theorem}{Theorem}[section]
\newtheorem{lemma}[theorem]{Lemma}
\newtheorem{claim}[theorem]{Claim}
\newtheorem{proposition}[theorem]{Proposition}
\newtheorem*{claim*}{Claim}
\newtheorem{conjecture}[theorem]{Conjecture}
\newtheorem{problem}[theorem]{Problem}
\theoremstyle{remark}
\newtheorem*{remark}{Remark}
\def\N{\mathbb{N}}
\def\Z{\mathbb{Z}}
\def\P{\mathbb{P}}
\def\E{\mathbb{E}}
\def\C{\mathcal}
\def\DD{D}
\def\JJ{J}
\def\ED{\Lambda}
\def\EDB{{\bar \Lambda}}
\def\CC{10^{40}}
\let\emptyset\varnothing
\let\eps\varepsilon
\let\originalleft\left
\let\originalright\right
\renewcommand{\left}{\mathopen{}\mathclose\bgroup\originalleft}
\renewcommand{\right}{\aftergroup\egroup\originalright}
\def\imod#1{\allowbreak\mkern10mu({\operator@font mod}\,\,#1)}
\begin{document}

\title{Reconstructing random jigsaws}

\author{Paul Balister}
\address{Department of Mathematical Sciences, University of Memphis, Memphis TN 38152, USA}
\email{pbalistr@memphis.edu}

\author{B\'{e}la Bollob\'{a}s}
\address{Department of Pure Mathematics and Mathematical Statistics, University of Cambridge, Wilberforce Road, Cambridge CB3\thinspace0WB, UK, {\em and\/}
Department of Mathematical Sciences, University of Memphis, Memphis TN 38152, USA, {\em and\/} London Institute for Mathematical Sciences, 35a South St., Mayfair, London W1K\thinspace2XF, UK}
\email{b.bollobas@dpmms.cam.ac.uk}

\author{Bhargav Narayanan}
\address{Department of Pure Mathematics and Mathematical Statistics, University of Cambridge, Wilberforce Road, Cambridge CB3\thinspace0WB, UK}
\email{b.p.narayanan@dpmms.cam.ac.uk}

\date{30 May 2017}
\subjclass[2010]{Primary 60C05; Secondary 60K35, 68R15}

\begin{abstract}
A colouring of the edges of an $n \times n$ grid is said to be \emph{reconstructible} if the colouring is uniquely determined by the multiset of its $n^2$ \emph{tiles}, where the tile corresponding to a vertex of the grid specifies the colours of the edges incident to that vertex in some fixed order. In 2015, Mossel and Ross asked the following question: if the edges of an $n \times n$ grid are coloured independently and uniformly at random using $q=q(n)$ different colours, then is the resulting colouring reconstructible with high probability? From below, Mossel and Ross showed that such a colouring is not reconstructible when $q = o(n^{2/3})$ and from above, Bordenave, Feige and Mossel and Nenadov, Pfister and Steger independently showed, for any fixed $\eps > 0$, that such a colouring is reconstructible when $q \ge n^{1+\eps}$. Here, we improve on these results and prove the following: there exist absolute constants $C, c > 0$ such that, as $n \to \infty$, the probability that a random colouring as above is reconstructible tends to $1$ if $q \ge Cn$ and to $0$ if $q \le cn$.
\end{abstract}

\maketitle

\section{Introduction}
The reconstruction problem for a family of discrete structures asks the following: is it possible to uniquely reconstruct a structure in this family from the `deck' of all its substructures of some fixed size? Combinatorial reconstruction problems have a very rich history. The oldest such problem is perhaps the graph reconstruction conjecture of Kelly and Ulam~\citep{kelly,ulam,harary}, and analogous questions for various other families of discrete structures have since been studied; see, for instance, the results of Alon, Caro, Krasikov and Roddity~\citep{sets} on reconstructing finite sets satisfying symmetry conditions, Pebody's~\citep{groups, necklaces} results on reconstructing finite abelian groups, and the results of Pebody, Radcliffe and Scott~\citep{plane} on reconstructing finite subsets of the plane.

Another natural line of enquiry, and the one we pursue here, is to ask how the answer to the reconstruction problem changes when we are required to reconstruct a \emph{typical} (as opposed to an arbitrary) structure in a family of discrete structures. These probabilistic questions typically have substantially different answers as compared to their extremal counterparts, as evidenced by the results of Bollob\'as~\citep{Bela} and Radcliffe and Scott~\citep{Alex}, for example.

\begin{figure}
\begin{center}
\begin{tikzpicture}[scale = 1.3]

\foreach \x in {0,1,2,3,4}
\foreach \y in {0,1,2,3,4}
	\node [scale=0.3] (l\x\y) at (\x, \y) {};	

\foreach \x in {0,1,2,3,4}
\foreach \y in {0,1,2,3,4}
	\node [scale=0.3] (r\x\y) at (\x+5, \y) {};	

\draw [very thick, blue](l11) -- (l12);
\draw [very thick, red](l11) -- (l21);
\draw [very thick, green](l11) -- (l01);
\draw [very thick, yellow](l11) -- (l10);

\draw [very thick, brown](l21) -- (l22);
\draw [very thick, blue](l21) -- (l31);
\draw [very thick, cyan](l21) -- (l20);

\draw [very thick, black](l31) -- (l32);
\draw [very thick, magenta](l31) -- (l41);
\draw [very thick, green](l31) -- (l30);

\draw [very thick, cyan](l12) -- (l13);
\draw [very thick, black](l12) -- (l22);
\draw [very thick, brown](l12) -- (l02);

\draw [very thick, red](l22) -- (l23);
\draw [very thick, yellow](l22) -- (l32);

\draw [very thick, cyan](l32) -- (l33);
\draw [very thick, green](l32) -- (l42);

\draw [very thick, green](l13) -- (l03);
\draw [very thick, red](l13) -- (l14);
\draw [very thick, magenta](l13) -- (l23);

\draw [very thick, black](l23) -- (l24);
\draw [very thick, green](l23) -- (l33);

\draw [very thick, brown](l33) -- (l34);
\draw [very thick, yellow](l33) -- (l43);

\draw [very thick, blue](r11) -- (r12);
\draw [very thick, red](r11) -- (r21);
\draw [very thick, green](r11) -- (r01);
\draw [very thick, yellow](r11) -- (r10);

\draw [very thick, brown](r21) -- (r22);
\draw [very thick, blue](r21) -- (r31);
\draw [very thick, cyan](r21) -- (r20);

\draw [very thick, black](r31) -- (r32);
\draw [very thick, magenta](r31) -- (r41);
\draw [very thick, green](r31) -- (r30);

\draw [very thick, cyan](r12) -- (r13);
\draw [very thick, black](r12) -- (r22);
\draw [very thick, brown](r12) -- (r02);

\draw [very thick, red](r22) -- (r23);
\draw [very thick, yellow](r22) -- (r32);

\draw [very thick, cyan](r32) -- (r33);
\draw [very thick, green](r32) -- (r42);

\draw [very thick, green](r13) -- (r03);
\draw [very thick, red](r13) -- (r14);
\draw [very thick, magenta](r13) -- (r23);

\draw [very thick, black](r23) -- (r24);
\draw [very thick, green](r23) -- (r33);

\draw [very thick, brown](r33) -- (r34);
\draw [very thick, yellow](r33) -- (r43);

\foreach \x in {5,6,7,8,9}
\foreach \y in {0,1,2,3,4}
	\fill [white] (\x-0.1,\y+0.4) rectangle (\x+0.1,\y+0.6);

\foreach \x in {5,6,7,8,9}
\foreach \y in {0,1,2,3,4}
	\fill [white] (\x+0.4,\y-0.1) rectangle (\x+0.6,\y+0.1);

\fill [white] (0,-0.1) rectangle (5,0.4);
\fill [white] (0,-0.1) rectangle (0.4,4);
\fill [white] (0,3.6) rectangle (5,4);
\fill [white] (3.6,-0.1) rectangle (4,4);

\fill [white] (5,-0.1) rectangle (9,0.5);
\fill [white] (5,-0.1) rectangle (5.5,4);
\fill [white] (5,3.5) rectangle (9,4);
\fill [white] (8.5,-0.1) rectangle (9,4);

\foreach \x in {1,2,3}
\foreach \y in {1,2,3}
	\node at (\x, \y) [inner sep=0.7mm, thick, circle, draw=black!100, fill=black!0] {};	

\foreach \x in {1,2,3}
\foreach \y in {1,2,3}
	\node at (\x+5, \y) [inner sep=0.7mm, thick, circle, draw=black!100, fill=black!0] {};
	
\node at (2,-0.15) {$J$};	
\node at (7,-0.15) {$\DD(J)$};
\end{tikzpicture}
\end{center}
\caption{A $(3,8)$-jigsaw and its deck.}
\label{edges}
\end{figure}
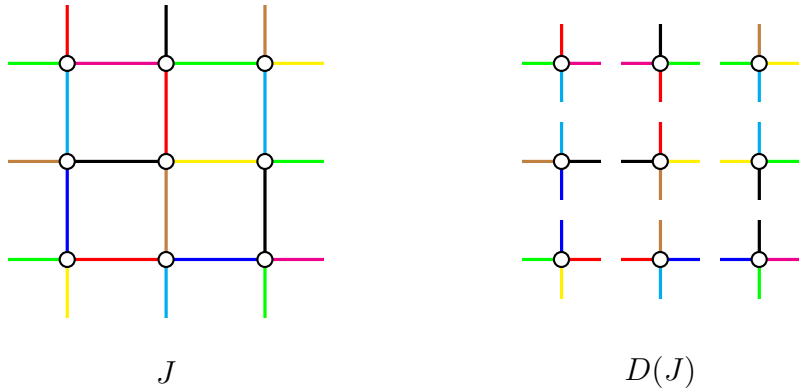

Here, we shall study a reconstruction problem proposed by Mossel and Ross in connection with the problem of shotgun sequencing DNA sequences. To state this problem, we need a few definitions. 

For $n \in \N$, we write $[n]$ for the set $\{1, 2, \dots, n\}$, and by the \emph{extended} $n\times n$ grid, we mean the grid $[n]^2\subset \Z^2$ together with the edges of ${\mathbb Z}^2$ incident to the boundary vertices. For $n,q\in \N$, an \emph{$(n,q)$-jigsaw} is a $q$-coloured extended $n \times n$ grid, i.e., an extended $n \times n$ grid whose $2n(n+1)$ edges are coloured using a set of $q$ different colours which we take to be $[q]$ for concreteness. The \emph{tile} of an $(n,q)$-jigsaw corresponding to a vertex $v\in [n]^2$ is given by the colouring of the four edges incident to $v$; more precisely, writing $e_1 = (0,1)$, $e_2 = (1,0)$, $e_3=-e_1$ and $e_4=-e_2$, if the edge between $v$ and $v+e_i$ gets colour $c_i\in [q]$ for $1\le i \le 4$, then the tile corresponding to $v$ is the tuple $(c_i)_{i=1}^4\in [q]^4$. Finally, the \emph{deck} of an $(n,q)$-jigsaw is the multiset of the tiles of the jigsaw, one for each vertex of  $[n]^2$.

We now define what it means for a jigsaw to be reconstructible from its deck.  Writing ${\mathcal J}(n,q)$ for the set of all $(n,q)$-jigsaws and ${\mathcal D}(n,q)$ for the family of all multisets of size $n^2$ whose elements are chosen from $[q]^4$, let $D\colon{\mathcal J}(n,q) \to {\mathcal D}(n,q)$ be the map sending a jigsaw $J$ to its deck $D(J)$. We say that a jigsaw $J \in {\mathcal J}(n,q)$ is \emph{reconstructible} if $D^{-1} (D (J))=\{J\}$; equivalently, a jigsaw $J$ is reconstructible if $D(J)=D(J')$ implies $J=J'$.

We view ${\mathcal J}(n,q)$ as a probability space by endowing it with the uniform distribution, and write $J({n,q})$ for a random $(n,q)$-jigsaw drawn from this distribution; equivalently, $J(n,q)$ is a random $(n,q)$-jigsaw generated by independently colouring each edge of the extended $n \times n$ grid with a randomly chosen element of $[q]$. Our primary concern is the following problem about the reconstructibility of a random $(n,q)$-jigsaw raised by Mossel and Ross~\citep{mossel}; of course, there exists only one $(n,1)$-jigsaw for each $n \in \N$ (and this jigsaw is trivially reconstructible), so in what follows, we assume implicitly that $q \ge 2$.

\begin{problem}
For what $q = q(n)$ is $\JJ(n,q)$ reconstructible with high probability?
\end{problem}
From below, Mossel and Ross~\citep{mossel} showed that $\P(\JJ(n,q) \text{ is reconstructible}) \to 0$ when $q = o(n^{2/3})$ due to the presence of local obstacles to reconstruction: in this regime, a random $(n,q)$-jigsaw contains, with high probability, two configurations each consisting of two neighbouring vertices which may be `exchanged' in the jigsaw, and this is easily seen to obstruct unique reconstruction; however, this argument does not extend to configurations involving single exchangeable vertices (and to a corresponding bound when $q = o(n)$) since the presence of two identical tiles in the deck does not necessarily prevent unique reconstruction. From above, Bordenave, Feige and Mossel~\citep{feige} and Nenadov, Pfister and Steger~\citep{steger} independently showed, for any fixed $\eps > 0$, that $\P(\JJ(n,q) \text{ is reconstructible}) \to 1$ when $q \ge n^{1+\eps}$. Here, we improve on both of these bounds and prove the following nearly optimal result.

\begin{theorem}\label{reconthm}
There exist absolute constants $C,c > 0$ such that, as $n \to \infty$, we have
\[
\P \left( \JJ(n,q) \text{ is reconstructible} \right) \to
\begin{cases}
1 &\mbox{if } q \ge Cn, \text{ and}\\
0 &\mbox{if } 2\le q \le cn.\\
\end{cases}
\]
\end{theorem}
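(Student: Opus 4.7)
The proof naturally splits into the upper and lower bound arguments, which can be attacked independently.

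For the lower bound ($q \le cn$ implies non-reconstructibility w.h.p.), the plan is to exhibit an explicit local obstruction that occurs with positive probability. The Mossel-Ross argument exhibits two `exchangeable' $2$-vertex blocks --- a configuration on $4$ vertices requiring $6$ independent colour coincidences, giving expected count $\Theta(n^4/q^6)$, which is $\Theta(1)$ only at $q\sim n^{2/3}$. To reach the linear threshold, I would look for a richer family of obstructions: heuristically one needs a configuration on $k$ vertices with only $2k$ colour constraints, so the expected count is $\Theta(n^{2k}/q^{2k}) = \Theta(1)$ when $q \sim n$. A plausible candidate is an enhanced block-swap, where two pairs of adjacent vertices are related by a partial exchange --- swapping only some of the $4$ tiles --- and the coincidence structure reduces the effective constraint count. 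Once the obstruction is identified, the second moment method shows it exists with positive probability, and a short check verifies that its presence indeed obstructs unique reconstruction.

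For the upper bound ($q \ge Cn$ implies reconstructibility w.h.p.), the natural tool is the first moment method applied to deck-preserving rearrangements. For a non-identity bijection $\pi\colon [n]^2 \to [n]^2$, let $\pi\cdot J$ denote the putative jigsaw in which tile $T_v^J$ is relocated to position $\pi(v)$; this is a valid jigsaw precisely when the induced edge colouring is consistent, and $J$ is reconstructible iff no such non-trivial $\pi$ yields $\pi\cdot J$ valid and distinct from $J$. One therefore bounds $\P(\JJ(n,q)\text{ not reconstructible}) \le \sum_{\pi\ne\mathrm{id}}\P(\pi\cdot \JJ(n,q)\text{ is valid and distinct from }\JJ(n,q))$. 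For each $\pi$, the probability factors (modulo dependencies) as $q^{-k(\pi)}$, where $k(\pi)$ counts the grid edges $(u,v)$ along which the displacement $\pi(u)-\pi(v)$ differs from $u-v$; these correspond to genuinely new random equalities among edge colours. The sum is then controlled by classifying $\pi$ according to the structure of its moved set $M_\pi = \{v : \pi(v)\ne v\}$ and the cycle structure of $\pi$ on $M_\pi$. Small $M_\pi$ are treated by exhaustive case analysis of the possible local swap patterns, while large $M_\pi$ are controlled via isoperimetric estimates on the grid that force $k(\pi)$ to grow essentially linearly in $|M_\pi|$.

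The principal obstacle will be the upper bound's combinatorial bookkeeping: the total number of permutations is $(n^2)!$, and obtaining useful bounds on $\P(\pi\cdot \JJ(n,q)\text{ valid})$ requires careful identification of which edge constraints are genuinely independent versus automatic from the original consistency of $\JJ(n,q)$. Especially delicate is the regime of small moved sets, where isoperimetric bounds are useless and one must enumerate every possible local swap pattern --- essentially matching the lower-bound obstructions --- to show that their combined contribution is $o(1)$ precisely when $q \ge Cn$ for an appropriately large constant $C$. Tightly matching the two sides at this small-$M_\pi$ regime is what ultimately pins down the threshold as linear rather than polynomial.
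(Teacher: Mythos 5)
Your plan for the $0$-statement has a genuine gap: the local obstruction you hope for does not materialise. Any rearrangement that moves an isolated vertex $u$ to a far-away position $v$ forces $\JJ_u=\JJ_v$ (four coincidences) and then changes nothing, since an identical tile dropped into the same slot yields the same colouring. To actually alter the colouring you must move at least one interior edge, i.e.\ move a connected block of $\ge 2$ vertices rigidly, and then consistency costs at least the edge boundary of that block, which is $\ge 6$; with $\Theta(n^4)$ placements for a pair of congruent blocks this gives expected count $O(n^4q^{-6})$, dead at $q\gg n^{2/3}$. There is no known configuration realising your ``$2k$ constraints on $k$ vertices'' heuristic, and the paper does not produce one. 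Instead it proves the $0$-statement by a global counting argument: the deck map sends $\mathcal{J}(n,q)$ (of size $q^{2n(n+1)}$) into the set of multisets of $n^2$ elements of $[q]^4$ (of size $\binom{n^2+q^4-1}{n^2}$), and for $q\le n/\sqrt{e}$ the codomain is too small for the map to be injective on more than a vanishing fraction, by a Stirling computation. This exhibits no explicit colliding pair --- the collisions are necessarily global --- which is exactly why the conjectured constant is $1/\sqrt{e}$ rather than anything a local second-moment analysis would predict. Relatedly, your closing claim that the linear threshold is ``pinned down'' by matching local swap patterns on both sides is mistaken for the same reason.

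Your $1$-statement plan is in the right spirit --- first moment over deck-preserving rearrangements, charging $q^{-1}$ per split edge, isoperimetry for large moved sets --- but as stated over all $(n^2)!$ bijections it runs into overcounting problems you have not resolved. The paper instead localises: with $k=\lceil\log n\rceil$ it shows that every feasible placement of $(2k+1)^2$ tiles on $[-k,k]^2$ centred at a given tile correctly identifies that tile's four neighbours, which suffices to assemble the bulk and then extend to the border. Even within a window, the naive union bound over injections fails because of maps with nested blocks (blocks contained in holes of other blocks); the paper's key device is to union-bound instead over ``templates'', namely single dual-connected components $A$ of the split-edge set together with a rigid placement of the incident quasiblocks, counted via the $(e(\Delta-1))^{\ell}$ bound on connected subgraphs of the planar dual and an isoperimetric lower bound on the number of independent colour constraints ($\gamma(h)\ge\max(\delta/20,\,2r_1+r_2/2-o(1))$ in their notation). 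If you want to pursue your global version you would need an equivalent decomposition of $\Lambda_\pi$ into dual-connected pieces and the same cluster/isoperimetry bookkeeping, so the localisation buys you a strictly smaller and cleaner union bound at no cost.
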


The two results contained in the statement of Theorem~\ref{reconthm} are proved by very different methods: the `$0$-statement' follows from a double counting argument, while the proof of the `$1$-statement' is based on an isoperimetric argument which draws from (but is somewhat more involved than) the strategy used by Bordenave, Feige and Mossel~\citep{feige} where one attempts to reconstruct a suitably large neighbourhood of a tile in order to identify its neighbours in the jigsaw.

We shall prove Theorem~\ref{reconthm} with $C = \CC$ and $c = 1/\sqrt{e}$. With some more effort, it should be possible to refine our proof of Theorem~\ref{reconthm} to show that the result holds for any $C>1$ (at which point our argument breaks down); however, we choose not to present the details of this stronger claim because we believe the critical number of colours for an $n \times n$ grid to be $n / \sqrt{e}$, and conjecture that the $0$-statement in Theorem~\ref{reconthm} is sharp.

\begin{conjecture}\label{mainconj}
For any $\eps > 0$, as $n \to \infty$, we have
\[
\P \left( \JJ(n,q) \text{ is reconstructible} \right) \to 1
\]
for all $q \ge (1/\sqrt{e} +\eps)n$.
\end{conjecture}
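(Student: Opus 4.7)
The plan is to combine a substantial refinement of the isoperimetric growth argument used for the $1$-statement of Theorem~\ref{reconthm} with a sharp first-moment computation matched exactly to the family of obstructions driving the $0$-statement at $q \le n/\sqrt{e}$. Writing $X=X(J)$ for the number of jigsaws $J' \ne J$ with $\DD(J')=\DD(J)$, the goal is to show $\E[X]=o(1)$ whenever $q \ge (1/\sqrt{e}+\eps)n$, which by Markov's inequality yields $\P(X=0)=1-o(1)$, i.e., that $J$ is reconstructible with high probability.

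The first step is to push the isoperimetric argument behind Theorem~\ref{reconthm} down to any $q > n$, as indicated in the remark above. The essential strategy is to show that from the deck alone one can, with high probability, identify a uniquely reconstructible neighbourhood around almost every tile, and then to glue these neighbourhoods together via a grid-connectivity argument. The breakdown at $q \approx n$ reflects the loss of enough ``local individuality'' of tiles to anchor the neighbourhood-growing step, and a new approach is needed to bridge the interval $(1/\sqrt{e}, 1]$.

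For this interval the plan is to enumerate alternative jigsaws $J'$ by bijections $\phi \colon [n]^2 \to [n]^2$ satisfying $\mathrm{tile}(J',\phi(v)) = \mathrm{tile}(J,v)$, and for each such $\phi$ to estimate the probability that the implied edge assignment is globally consistent. Each edge along which $\phi$ breaks adjacency contributes a factor of $q^{-1}$, so the expected count decomposes schematically as
\[
\E[X] \le \sum_{\phi \ne \mathrm{id}} q^{-m(\phi)},
\]
where $m(\phi)$ counts the broken edges. The constant $\sqrt{e}$ is what one expects to emerge from balancing this count against the permutation count $(n^2)!$ via Stirling's formula, mirroring the double-counting computation on the $0$-statement side.

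The principal obstacle is the combinatorial accounting of the above sum. The support of $\phi$ may range from $O(1)$ up to $n^2$, and wildly different $\phi$ can produce the same alternative jigsaw $J'$, so the naive sum overcounts dramatically. One must organise the contributions by the connected components of the symmetric difference of $J$ and $J'$ in the grid, classify these components by isomorphism type, and bound each class without losing constant factors---precisely the sort of tight book-keeping that the proof of Theorem~\ref{reconthm} bypasses in settling for $C=\CC$. Making every step sharp simultaneously, supplemented perhaps by a second-moment refinement at the threshold to handle the dominant obstruction, is where the heart of the problem lies.
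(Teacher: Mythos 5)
This statement is Conjecture~\ref{mainconj}: the paper does not prove it, and in fact explicitly states (in Section~\ref{s:conc}) that its methods could at best be pushed to $q \ge Cn$ for constants $C>1$, and that reaching any constant strictly greater than $1/\sqrt{e}$ ``appears to be completely out of the reach of our methods.'' So there is no proof in the paper for you to match, and your text should be judged as a standalone argument.

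As such, it is a research programme rather than a proof, and the gap is exactly where you locate it. The reduction to showing $\E[X]=o(1)$ via Markov is fine in principle, and the schematic bound $\E[X]\le\sum_{\phi\ne\mathrm{id}}q^{-m(\phi)}$ is the natural starting point; but every nontrivial step is deferred. Concretely: (i) you assert that the paper's template/isoperimetry argument can be pushed to all $q>n$, but the paper's union bound pays $n^2$ per rigid block placed against a probability gain of only $q^{-\Theta(\sqrt{s})}$ from the isoperimetric inequality for a block of size $s$, which is why even $C>1$ already requires stability versions of the isoperimetric inequality and is only claimed, not proved; (ii) for the range $(1/\sqrt{e},1]$ you propose to organise $\phi$ by components of the ``symmetric difference'' and bound each isomorphism class ``without losing constant factors,'' but no such bound is given, and it is precisely this accounting --- in particular, controlling the entropy of global rearrangements whose per-edge cost is only $q^{-1}$ against a count that behaves like $(n^2)!^{o(1)}$ per scale --- that constitutes the open problem; (iii) the closing suggestion of ``a second-moment refinement'' is not coherent here, since $X$ already counts the bad objects and the first moment is the whole method --- if $\E[X]$ is not $o(1)$ at the threshold, a second moment of $X$ cannot rescue a $1$-statement of this form. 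In short, the proposal correctly diagnoses where the difficulty lies but does not supply the missing argument; the conjecture remains unproved by it.
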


This paper is organised as follows. We begin with some notation and preliminary discussion in Section~\ref{s:prelim}. We give the short proof of the $0$-statement in Theorem~\ref{reconthm} in Section~\ref{s:lower}. We prove the key estimate required for the proof of the $1$-statement in Theorem~\ref{reconthm} in Section~\ref{s:nhd}, and complete the proof of our main result in Section~\ref{s:upper}. We conclude with some discussion in Section~\ref{s:conc}.

\begin{remark}After the results in this paper were proved (in November 2016), but before this paper was completed, Martinsson~\citep{mart}, working independently, also announced (in January 2017) a proof of a result analogous to Theorem~\ref{reconthm} in a very closely related model (and with a more reasonable constant in the $1$-statement). We briefly point out that while the respective $0$-statements are established in essentially the same fashion both here and in~\citep{mart}, the estimates needed to prove the respective $1$-statements are established by quite different approaches.
\end{remark}
\section{Preliminaries}\label{s:prelim}
For a pair of integers $a \le b$, we write $[a,b]$ for the set $\{a, a+1, \dots, b\}$, and for a natural number $n\in \N$, we abbreviate the set $[1,n]$ by $[n]$.

We define the vectors $e_1 = (0,1)$, $e_2 = (1,0)$, $e_3 = -e_1$ and $e_4 = -e_2$, and we endow the square lattice $\Z^2$ with the graph structure of the infinite grid where two vertices $u, v \in \Z^2$ are adjacent if $u - v = e_i$ for some $1 \le i \le 4$; also, we write $\ED$ for the set of edges of the infinite grid on $\Z^2$.

Let $X \subset \Z^2$ be a finite subset of the square lattice. We write $\ED(X) \subset \ED$ for the set of edges of the grid induced by $X$ and $\partial X \subset \ED$ for the \emph{boundary} of $X$, i.e., the set of edges between between $X$ and $\Z^2 \setminus X$; also, we write $\EDB(X) = \ED(X) \cup \partial X$ for the set of edges of the grid with at least one endpoint in $X$. Since $X$ is finite, note that $\Z^2 \setminus X$ contains a unique infinite connected component; the \emph{external boundary} of $X$, written $\partial_e X$, is the set of edges between $X$ and this infinite component, and the \emph{internal boundary} of $X$, written $\partial_i X$, is defined to be $\partial X \setminus \partial_e X$. Finally, the \emph{vertex boundary} of $X$ is defined to be the set of vertices of $X$ incident to some edge of $\partial_e X$.

Observe that if the points of a finite set $X \subset \Z^2$ have $a$ different $x$-coordinates and $b$ different $y$-coordinates in total, then $|X|\le ab$ and the external boundary of $X$ has size at least $2a+2b$; this observation implies the following well-known isoperimetric statement.

\begin{proposition}\label{p:iso}
For any finite set $X \subset \Z^2$, we have $|\partial X | \ge |\partial_e X| \ge 4 |X|^{1/2}$. \qed
\end{proposition}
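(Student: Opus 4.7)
The first inequality $|\partial X| \ge |\partial_e X|$ is immediate from the decomposition $\partial X = \partial_e X \sqcup \partial_i X$, so the content of the statement is the bound $|\partial_e X| \ge 4|X|^{1/2}$. My plan is to formalise the isoperimetric observation sketched in the paragraph just before the proposition: to produce at least $2a+2b$ edges in $\partial_e X$, where $a$ and $b$ are the number of distinct $x$- and $y$-coordinates appearing in $X$, and then combine this with the trivial bound $|X|\le ab$ via AM-GM.

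For each of the $a$ values $x_0$ occurring as an $x$-coordinate of some point of $X$, I would look at the nonempty, finite ``column'' $X_{x_0} = X \cap (\{x_0\}\times \Z)$. Let $v^+$ be the point of $X_{x_0}$ with maximum $y$-coordinate and $v^-$ the one with minimum $y$-coordinate; the edges $v^+(v^++e_1)$ and $v^-(v^-+e_3)$ both lie in $\partial X$, and in fact in $\partial_e X$, since the vertex above $v^+$ (respectively below $v^-$) lies outside $X$ and is joined to infinity by the vertical ray continuing upward (respectively downward), which is disjoint from $X$ by the maximality (respectively minimality) of $v^+$ (respectively $v^-$) in its column. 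Ranging over the $a$ columns yields $2a$ distinct vertical edges of $\partial_e X$; the symmetric argument applied to rows produces $2b$ distinct horizontal edges of $\partial_e X$. Since vertical and horizontal edges are clearly distinct, $|\partial_e X|\ge 2a+2b$.

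To conclude, I would note that $X$ is contained in the Cartesian product of its $x$- and $y$-projections, whence $|X|\le ab$, and invoke the AM-GM inequality to obtain $2a+2b \ge 4\sqrt{ab} \ge 4|X|^{1/2}$, completing the proof.

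The argument contains no real obstacle; the only point that requires a moment's care is the verification that the boundary edges constructed from the extremal vertices in each column and row genuinely belong to the \emph{external} boundary rather than merely to $\partial X$. This is where the finiteness of $X$ enters, via the explicit escape rays to infinity described above.
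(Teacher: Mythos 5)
Your proof is correct and is precisely the argument the paper intends: the proposition carries a \qed\ because it is meant to follow from the observation in the preceding paragraph that $|X|\le ab$ and $|\partial_e X|\ge 2a+2b$, which you have simply written out in full (including the escape-ray verification that the extremal edges lie in the \emph{external} boundary) before concluding with AM--GM. No issues.
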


We say that a finite set $X \subset \Z^2$ is \emph{connected} if it is connected when viewed as a subset of the vertex set of the infinite grid, and in what follows, the distance between two points $u, v \in \Z^2$ will always mean the graph-distance between $u$ and $v$ in the infinite grid. Also, we say that a finite set of edges $A \subset \ED$ is \emph{dual-connected} if the corresponding set of edges in the planar dual of the infinite grid is connected. Finally, for $X \subset \Z^2$ and $A \subset \ED$, we write $D(X,A)$ for the graph on $X$ whose edge set is $\ED(X) \setminus A$; in other words, $D(X,A)$ is the graph induced by $X$ in the grid after we delete the edges in $A$.

It will be convenient to have some notation to deal with maps from $\Z^2$ to $\Z^2$. Let $f$ be an injective map from a finite set $X \subset \Z^2$ to $\Z^2$. We say that a set $Y \subset X$ is \emph{$f$-rigid} if $f(x) - f(y) = x - y$ for all $x,y \in Y$. A \emph{block} of $f$ is a maximal connected rigid subset of $X$; it is easy to check that each vertex of $X$ belongs to a unique block, so the blocks of $f$ partition $X$. An edge $z\in \ED(X)$ is said to be \emph{$f$-split} if the endpoints of $z$ belong to different blocks of $f$. We write $\ED_f \subset \ED$ for the union of $\partial X$ and the set of $f$-split edges; loosely speaking, $\ED_f$ is the set of those edges across which we cannot `control' $f$. Note that $\ED_f$ may be decomposed into dual-connected components; the following geometric fact about such components will prove useful.

\begin{proposition}\label{block}
Let $f$ be an injective map from a finite set $X \subset \Z^2$ to $\Z^2$ and let $A$ be a dual-connected component of $\ED_f$. If $Y \subset X$ is a connected component of $D(X,A)$, then the vertices of $Y$ incident to some edge of $A$ are all contained in a single block of $f$. \qed
\end{proposition}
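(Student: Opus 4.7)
The plan hinges on a key lemma that follows directly from the definition of a dual-connected component: if an edge $e \in \ED$ shares a face with some edge of $A$ and $e \notin A$, then $e \notin \ED_f$, since otherwise $e$ and $A$ would lie in the same dual-connected component of $\ED_f$, contradicting $e \notin A$. Consequently, any edge $e \in \ED(X) \setminus A$ sharing a face with some element of $A$ is non-split, being neither in $\partial X$ (both endpoints are in $X$) nor $f$-split.

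Granted the lemma, the strategy is to show that any two vertices $u,v$ of $Y$ incident to $A$ can be joined by a path of non-split edges in $\ED(X)$, which places them in the same block. Since $A$ is dual-connected, if $a_u, a_v \in A$ are edges incident to $u$ and $v$ respectively, then there is a sequence $a_u = a_0, a_1, \dots, a_\ell = a_v$ of edges in $A$ in which consecutive $a_i, a_{i+1}$ share a face $\phi_i$. I would construct the desired primal path inductively along this chain: at each shared face $\phi_i$, walk along the boundary of $\phi_i$ on the $Y$-side of $A$ between appropriate endpoints of $a_i$ and $a_{i+1}$. Every edge traversed lies on a face containing some edge of $A$, so by the lemma it is either in $A$ (which the walk avoids, since it stays in $Y$) or non-split; concatenating the local walks yields a path from $u$ to $v$ in $\ED(X)$ using only non-split edges.

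The main obstacle is the planar bookkeeping needed to justify the $Y$-side traversal at each face: when $a_i, a_{i+1}$ are opposite edges of $\phi_i$ (so they share no vertex), or when some vertex of $\phi_i$ lies outside $X$, one must check by a careful case analysis of the $A$-edges around $\phi_i$ that a valid walk between suitable endpoints in $X \cap Y$ exists. The key auxiliary observation for these cases is that any boundary edge sharing a face with an $A$-edge is itself forced into $A$ (by the lemma applied to $\partial X \subset \ED_f$); this rules out precisely the configurations in which the local walk could become disconnected from $Y$, and reduces the remaining cases to short traversals of two or three edges on $\phi_i$ that the lemma certifies as non-split.
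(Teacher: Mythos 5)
Your key lemma is correct and is the right starting point: any edge not in $A$ that shares a face with an edge of $A$ cannot lie in $\ED_f$, hence (if it meets $X$ at all) it lies in $\ED(X)$, is non-split, and has both endpoints in the same block \emph{and} in the same component of $D(X,A)$. The paper asserts this proposition without proof, so the question is whether your sketch closes the argument, and as written I do not think it does. The gap is in the step ``walk along the boundary of $\phi_i$ on the $Y$-side of $A$'': an arbitrary dual chain $a_0,\dots,a_\ell$ in $A$ from $a_u$ to $a_v$ need not stay adjacent to $Y$. Intermediate edges $a_i$ may have no endpoint in $Y$ at all --- they may border only a different component of $D(X,A)$, or be boundary edges whose relevant neighbourhood lies outside $X$ --- in which case ``the $Y$-side of $\phi_i$'' contains no vertex of $Y$ and the induction has nothing to walk on. (For instance, if $X$ is an annulus and $A$ contains $\partial X$ together with two seams splitting $X$ into left and right components, a dual chain between two edges of $A$ touching the left component can run along the outer boundary past the right component.) Your auxiliary observation that boundary edges sharing a face with $A$ are forced into $A$ is true, but it does not exclude these configurations.

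Relatedly, your argument never genuinely invokes the hypothesis that $u$ and $v$ lie in the same component of $D(X,A)$, yet the statement is false without it: the two endpoints of a split edge of $A$ are both incident to $A$ but lie in different blocks by definition, and vertices on opposite sides of $A$ generally lie in different blocks. So the crux --- which is exactly what ``on the $Y$-side'' silently assumes --- is that the dual chain can be \emph{chosen} so that every face visited carries a vertex of $Y$ on a consistently tracked side, and that the walk terminates at $v$ rather than at the opposite endpoint of $a_v$. To justify this you must use connectivity: since no edge of $\ED(X)\setminus A$ crosses a dual segment of $A$, all of $Y$ lies in a single component $\Omega$ of the complement of the union of the dual segments of $A$; the midpoints of $a_u$ and $a_v$ both lie on $\partial\Omega$; and the chain should be taken along $\partial\Omega$, whose connectedness is itself a (standard but nontrivial) planar-topology input. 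With that choice your face-by-face analysis does go through; without it, the construction can fail at the first face where $A$ turns away from $Y$.
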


We will also need the following property of finite grids.
\begin{proposition}\label{p:3sep}
	For $n\in \N$, if $X \subset [n]^2$ is such that the distance between any pair of distinct vertices in $X$ is at least three, then $[n]^2 \setminus X$ is connected. \qed	
\end{proposition}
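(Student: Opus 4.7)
The plan is to show that $Y := [n]^2 \setminus X$ is connected by taking any two vertices $u, u' \in Y$, fixing any shortest grid path $P$ between them in $[n]^2$, and rerouting $P$ around each vertex of $X$ it visits via a short local detour. The key enabling observation is that for any $v \in X$, every other vertex of $X$ lies at grid-distance at least $3$ from $v$, so every point of $\Z^2$ at distance $1$ or $2$ from $v$ automatically lies in $Y$; consequently the only constraint on a detour around $v$ is that it remain inside $[n]^2$.

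Concretely, suppose $P$ enters $v \in X$ from the neighbour $a = v+e_i$ and leaves it for the neighbour $b = v+e_j$; both lie in $Y$, and since $P$ is shortest we have $j \neq i$. If $j \not\equiv i+2 \pmod{4}$ the two neighbours are perpendicular, and the diagonal corner $c = v + e_i + e_j$ inherits its two coordinates from $a$ and $b$, so $c \in [n]^2$; together with $d(v,c)=2$ this puts $c$ in $Y$, and we replace the subpath $a, v, b$ by $a, c, b$. If instead $j \equiv i+2 \pmod{4}$ the neighbours are antipodal; since $v \in [n]^2$, at least one perpendicular unit vector $e_\ell$ satisfies $v + e_\ell \in [n]^2$, and for this $\ell$ the three vertices $v+e_i+e_\ell$, $v+e_\ell$, $v-e_i+e_\ell$ all lie in $[n]^2$ (their coordinate bounds are inherited from $a, b, v+e_\ell$) and at distance at most $2$ from $v$, hence in $Y$. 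Replace $a, v, b$ by $a, v+e_i+e_\ell, v+e_\ell, v-e_i+e_\ell, b$.

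Performing this rerouting at every visit of $P$ to a vertex of $X$ yields a walk from $u$ to $u'$ entirely in $Y$, establishing connectivity; the small cases $n \le 2$ are immediate since no antipodal configuration can arise in $[n]^2$ for such $n$. The only step requiring care is the coordinate bookkeeping needed to confirm that each detour remains inside $[n]^2$, handled in the two cases above; no deeper obstacle appears, and the argument reduces to a purely local case analysis enabled entirely by the distance-$3$ separation hypothesis.
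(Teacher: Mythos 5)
Your argument is correct, and the paper in fact states Proposition~\ref{p:3sep} without proof, so there is no ``official'' argument to compare against; your local rerouting is a perfectly standard way to supply one. The two cases are handled cleanly: in the perpendicular case the corner $v+e_i+e_j$ indeed inherits one coordinate from each of $a$ and $b$, and in the antipodal case the three detour vertices lie in $[n]^2$ and at distance at most $2$ from $v$, hence outside $X$ by the separation hypothesis; the degenerate situations (endpoints in $Y$, shortest paths being simple, $n\le 2$ excluding the antipodal case) are all accounted for. No gap.
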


Finally, let us quickly restate the problem at hand formally. Note that the edges of the extended $n \times n$ grid are precisely the elements of the set $\EDB([n]^2)$, so an $(n,q)$-jigsaw $J$ is a map $J\colon \EDB([n]^2) \to [q]$. Given an $(n,q)$-jigsaw $J$, the tile $J_v$ corresponding to a vertex $v \in [n]^2$ is the sequence $(J(v,v+e_i))_{i=1}^4 \in [q]^4$, and the deck $\DD(J)$ of $J$ is the multiset $\{ J_v: v \in [n]^2 \}$. As defined previously, a jigsaw $J$ is reconstructible from its deck if $\DD(J') = \DD(J)$ implies that $J' = J$. We write $\JJ(n,q)$ to denote a random $(n,q)$-jigsaw generated by independently colouring each edge of $\EDB([n]^2)$ with a randomly chosen element of $[q]$. In this language, our primary concern is the following question: for what $q = q(n)$ is $\JJ(n,q)$ reconstructible with high probability?

We shall make use of standard asymptotic notation; in what follows, the variable tending to infinity will always be $n$ unless we explicitly specify otherwise. We use the term \emph{with high probability} to mean with probability tending to $1$ as $n \to \infty$. For the sake of clarity of presentation, we systematically omit floor and ceiling signs whenever they are not crucial.

\section{Proof of the $0$-statement}\label{s:lower}

In this short section, we prove the $0$-statement in Theorem~\ref{reconthm} by an elementary counting argument.
\begin{proof}[Proof of the $0$-statement in Theorem~\ref{reconthm}]
Recall that ${\mathcal J}(n,q)$ is the set of all $(n,q)$-jigsaws, ${\mathcal D}(n,q)$ is the family of all multisets of size $n^2$ whose elements are chosen from $[q]^4$, and $D\colon{\mathcal J}(n,q) \to {\mathcal D}(n,q)$ is the map sending a jigsaw $J$ to its deck $D(J)$. 

Let ${\mathcal J}_R(n,q) \subset {\mathcal J}(n,q)$ denote the set of all reconstructible jigsaws, i.e., jigsaws $J$ such that $D^{-1} (D (J))=\{J\}$. Since $D\colon{\mathcal J}_R(n,q) \to {\mathcal D}(n,q)$ is an injection, $|{\mathcal J}_R(n,q)| \le |{\mathcal D}(n,q)|$. Consequently, we have
\[ \P(\JJ(n,q) \text{ is reconstructible}) = |{\mathcal J}_R(n,q)| / |{\mathcal J}(n,q)| \le |{\mathcal D}(n,q)| /  |{\mathcal J}(n,q)|.\]
Now, it is easy to see that
\[
|{\mathcal D}(n,q)|= \binom{n^2+q^4-1}{n^2} \hspace{10pt} \text{and} \hspace{10pt}  |{\mathcal J}(n,q)|=q^{2n(n+1)},
\]
so it follows that
\[ \P(\JJ(n,q) \text{ is reconstructible}) \le \binom{n^2 + q^4 - 1}{n^2 - 1} q^{-2n^2 - 2n} \le \binom{n^2 + q^4}{n^2} q^{-2n^2 - 2n} .\]

If $2 \le q \le \sqrt{n}$, then we have
\[
\binom{n^2 + q^4}{n^2} q^{-2n^2 - 2n}\le \binom{2n^2}{n^2} 2^{-2n^2 - 2n} \le 2^{-2n}.
\]
If $\sqrt{n} < q \le n/\sqrt{e}$ on the other hand, then we deduce using Stirling's approximation that
\begin{align*}
\binom{n^2 + q^4}{n^2}   q^{-2n^2 - 2n}
&= \frac{q^{2n^2 - 2n}}{(n^2)!} \prod_{i=1}^{n^2} \left(1+\frac{i}{q^4}\right) \le\frac{q^{2n^2-2n}}{(n^2)!}\left(1+\frac{n^2}{q^4}\right)^{n^2}\\
&=O\left(\frac{q^{-2n}}{n}\exp\left( n^2\log\left(\frac{q^2}{n^2}\right) + \frac{n^4}{q^4} + n^2\right)\right)=O\left(q^{-2n}\right).
\end{align*}

We conclude from the above estimates that
\[\P(\JJ(n,q) \text{ is reconstructible}) = o(1)\]
for all $2 \le q \le n/\sqrt{e}$.
\end{proof}

\section{Reconstructing large neighbourhoods}\label{s:nhd}
The starting point of our approach to proving the $1$-statement in Theorem~\ref{reconthm} is the strategy adopted by Bordenave, Feige and Mossel~\citep{feige} to show that $\JJ = \JJ(n,q)$ is reconstructible with high probability when $q \ge n^{1+\eps}$ for some fixed $\eps > 0$. Given the deck $\DD(\JJ)$ of $\JJ$, Bordenave, Feige and Mossel use the following procedure to identify the neighbours of a given tile $\JJ_v$ with $v \in [n]^2$. For some large integer $k \approx 1/\eps$, they consider all subsets of $\DD(\JJ)$ of size $(2k+1)^2$ that include the tile $\JJ_v$ and for each such set, they check if the tiles in that set can be `legally assembled' on a $(2k+1) \times (2k+1)$ grid with $\JJ_v$ at the centre of this grid. While there might exist many such legal assemblies with $\JJ_v$ at the centre, they show that with high probability, the four neighbours of $\JJ_v$ in any such legal assembly are identical to the four tiles neighbouring $\JJ_v$ in the original jigsaw. This allows them to identify the neighbours of all tiles corresponding to vertices at distance at least $k$ from the boundary of the grid; once this has been accomplished, it is reasonably straightforward to reconstruct $\JJ$.

We adopt a similar strategy to the one described above, although in order to show that $\JJ(n,q)$ is reconstructible when $q \approx n$ (as opposed to when $q \ge n^{1+\eps}$), we require more delicate arguments; for example, we need to take $k \approx \log n$ (as opposed to $k \approx 1/\eps$) and this in turn necessitates more careful estimates.

We now fix positive integers $n,q \in \N$ and set $k = k(n) = \lceil \log n \rceil$; all inequalities in the sequel will hold provided $n$ and $k$ are sufficiently large.

\subsection{Constraint graphs}
Let $J\colon \EDB([n]^2) \to [q]$ be an $(n,q)$-jigsaw, and  let $f$ be an injection from a finite set $X \subset \Z^2$ to $[n]^2$. We say that \emph{$f$ is feasible for $J$} if for any pair of adjacent vertices $x,y \in X$, we have $J(x',x'+ y -x) = J(y', y'+x - y)$, where $x' = f(x)$ and $y' = f(y)$. Clearly, any injective function $f$ as above describes an arrangement of a subset of the tiles of $J$ on the grid at the vertices of $X$ (where the tile placed at a position $x \in X$ is precisely $J_{f(x)}$); our definition of feasibility makes precise the notion of when $f$ describes a legal arrangement of tiles. Constraint graphs provide us with an alternate description of legal arrangements and we define these objects below.

The \emph{constraint graph} of an injective map $f$ from a finite set $X \subset \Z^2$ to $\Z^2$, denoted by $\C{G}_f$, is a graph whose vertex set is a subset of $\ED$ and whose edge set contains one edge, called a \emph{constraint}, for each $f$-split edge, where if $\{x,y\}$ is an $f$-split edge with $y = x + e_i$ for some $1 \le i \le 4$, then the constraint corresponding to this edge is an edge joining $\{f(x), f(x) + e_i\}$ and $\{f(y), f(y) - e_i\}$ in the constraint graph; the vertex set of $\C{G}_f$ is the subset of $\ED$ spanned by the edges of $\C{G}_f$. In the language of constraint graphs, it is clear that if $J\colon \EDB([n]^2) \to [q]$ is an $(n,q)$-jigsaw and $f$ is an injection from a finite subset of $\Z^2$ to $[n]^2$, then $f$ is feasible for $J$ if and only if $J$ is constant on each connected component of $\C{G}_f$. We define $\gamma(f)$ to be the difference between the size of the vertex set of $\C{G}_f$ and the number of connected components of $\C{G}_f$. We require the following observation due to Bordenave, Feige and Mossel~\citep{feige}; we include the short proof for completeness.

\begin{proposition}\label{f-cost}
For any injective map $f$ from a finite subset of $\Z^2$ to $[n]^2$, we have
\[ \P(f \text{ is feasible for } \JJ(n,q)) = q^{-\gamma(f)}.\]
\end{proposition}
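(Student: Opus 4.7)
The plan is to reduce feasibility to a system of equality constraints on the random edge-colouring of the grid, realise those constraints as connectivity in $\C{G}_f$, and then exploit the independence of edge colours under $\JJ(n,q)$.

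First I would unpack the definition of feasibility edge-by-edge. For every adjacent pair $x,y \in X$ with $y = x + e_i$, feasibility demands that $J$ assign the same colour to the two grid-edges $\{f(x), f(x)+e_i\}$ and $\{f(y), f(y)-e_i\}$. If $x$ and $y$ lie in the same $f$-block, then $f(y) = f(x) + e_i$, so these two grid-edges coincide and the constraint is automatic. Otherwise $\{x,y\}$ is $f$-split and the requirement is a genuine equality between the colours of two distinct elements of $\ED$, and this is exactly the edge of $\C{G}_f$ attached to the $f$-split pair $\{x,y\}$ by definition.

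Consequently $f$ is feasible for $J$ if and only if $J$ is constant on each connected component of $\C{G}_f$. Since $f$ maps into $[n]^2$, every vertex of $\C{G}_f$ is an edge of $\ED$ with at least one endpoint in $[n]^2$, so it lies in $\EDB([n]^2)$ and is therefore coloured independently and uniformly from $[q]$ by $\JJ(n,q)$. Writing $V = V(\C{G}_f)$ and decomposing $\C{G}_f$ into its connected components of sizes $k_1, \dots, k_c$, the probability that $k_i$ i.i.d.\ uniform colours agree is $q^{-(k_i - 1)}$, and independence across components yields
\[ \P(f \text{ is feasible for } \JJ(n,q)) = \prod_{i=1}^{c} q^{-(k_i-1)} = q^{-(|V| - c)} = q^{-\gamma(f)}. \]

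I do not expect any serious obstacle. The only point that deserves a sentence of care is confirming that the vertex-labels of $\C{G}_f$ are genuine distinct elements of $\EDB([n]^2)$ (so that the independence of the corresponding $\JJ$-colours really is available): this is immediate from the definition, since each vertex of $\C{G}_f$ is a single element of $\ED$ and the colour of each such element is an independent draw from $[q]$ in $\JJ(n,q)$. A second cosmetic check — that an $f$-split adjacency $\{x,y\}$ contributes exactly one edge rather than multiple parallel constraints — is also built into the definition and requires no argument.
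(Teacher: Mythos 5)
Your proof is correct and follows essentially the same route as the paper's: both reduce feasibility to the event that $\JJ$ is constant on each connected component of $\C{G}_f$ and then use the independence of the edge colours, the only cosmetic difference being that you compute $\prod_i q^{-(k_i-1)}$ over components where the paper fixes a representative per component and counts $\gamma(f)$ independent probability-$1/q$ events.
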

\begin{proof}
First, choose a representative from each connected component of $\C{G}_f$. It is clear that $f$ is feasible for $\JJ = \JJ(n,q)$ if and only if the following holds: for each vertex of $\C{G}_f$, the colour assigned by $\JJ$ to this vertex is equal to the colour assigned by $\JJ$ to the representative vertex from the corresponding connected component of $\C{G}_f$. Thus, the event that $f$ is feasible for $\JJ$ is an intersection of $\gamma(f)$ independent events, and each of these events has probability $1/q$; the claim follows.
\end{proof}

It is easy to see that the maximum degree of a constraint graph is at most two, so every constraint graph is a union of paths and cycles; this observation implies the following.

\begin{proposition}\label{gamma-bound}
If $f$ is an injection from a finite subset of $\Z^2$ to $\Z^2$, then $\gamma(f) \ge |V(\C{G}_f)|/2 \ge |E(\C{G}_f)|/2$. \qed
\end{proposition}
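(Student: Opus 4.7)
The plan is to exploit the structural observation explicitly noted just before the statement: since $\C{G}_f$ has maximum degree at most two, each of its connected components is either a path or a cycle. Both inequalities will follow by separately analysing how vertices, edges, and components balance on such pieces.

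For the first inequality $\gamma(f) \ge |V(\C{G}_f)|/2$, I would begin by recalling that the paper defines the vertex set of $\C{G}_f$ to be exactly the subset of $\ED$ spanned by its edges. In particular, $\C{G}_f$ has no isolated vertices, so every connected component contains at least two vertices and at least one edge. Letting $c$ denote the number of connected components of $\C{G}_f$, this gives $c \le |V(\C{G}_f)|/2$, and since by definition $\gamma(f) = |V(\C{G}_f)| - c$, we obtain $\gamma(f) \ge |V(\C{G}_f)| - |V(\C{G}_f)|/2 = |V(\C{G}_f)|/2$, as required.

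For the second inequality $|V(\C{G}_f)|/2 \ge |E(\C{G}_f)|/2$, equivalently $|V(\C{G}_f)| \ge |E(\C{G}_f)|$, I would use the path/cycle decomposition again: a path on $\ell$ vertices contributes $\ell$ vertices and $\ell - 1$ edges, while a cycle on $\ell$ vertices contributes $\ell$ vertices and $\ell$ edges. In either case the number of vertices in the component is at least the number of edges, and summing over all components yields $|V(\C{G}_f)| \ge |E(\C{G}_f)|$.

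There is essentially no obstacle here: once the maximum-degree-two observation is in hand, both inequalities reduce to elementary counting on paths and cycles, with the only mildly subtle point being the verification that $\C{G}_f$ has no isolated vertices, which is immediate from the paper's definition of its vertex set.
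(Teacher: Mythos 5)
Your proof is correct and follows exactly the route the paper intends: the paper leaves Proposition~\ref{gamma-bound} unproved beyond the remark that $\C{G}_f$ has maximum degree two and is hence a union of paths and cycles, and you correctly fill in the component counting (no isolated vertices, so at most $|V(\C{G}_f)|/2$ components, giving $\gamma(f)\ge |V(\C{G}_f)|/2$; each path or cycle has at least as many vertices as edges, giving $|V(\C{G}_f)|\ge |E(\C{G}_f)|$). No substantive difference from the paper's argument.
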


\subsection{Windows}
To make precise the idea of recovering the four tiles neighbouring a given tile by attempting to reconstruct a large neighbourhood the tile in question, we need the notion of a `window'.

For $v \in [n]^2$ and an $(n,q)$-jigsaw $J$, a \emph{$v$-window} with respect to $J$ is an injective map $f\colon [-k,k]^2 \to [n]^2$ such that $f(0,0) = v$ and $f$ is feasible for $J$; we remind the reader that $k = \lceil \log n \rceil$ here, and in what follows.

If $v \in [n]^2$ is at distance at least $k$ from the vertex boundary of the $n \times n$ grid, then the map defined by $f(x) = v + x$ for all $x \in [-k,k]^2$ is a $v$-window; more generally, if there exists some $v' \in [n]^2$ at distance at least $k$ from the vertex boundary of the $n \times n$ grid such that $J_{v'} = J_{v}$, then the map defined by $f(0,0) = v$ and $f(x) = v' + x$ for all $x \in [-k,k]^2 \setminus \{(0,0)\}$ is a $v$-window. A $v$-window $f$ is said to be \emph{trivial} if $(J_{f(e_i)})_{i = 1}^4 = (J_{v' + e_i})_{i = 1}^4$ for some $v' \in [n]^2$ such that $J_{v'} = J_v$; in other words, a $v$-window is trivial if the four tiles neighbouring $J_v$ in the $v$-window are identical to the four tiles neighbouring some tile $J_{v'}$ in the jigsaw, with $J_{v'}$ itself identical to $J_v$. This definition of triviality is motivated by the fact that when $q \approx n$, the deck of $\JJ(n,q)$ may contain some tiles of multiplicity greater than one (though, as we shall see, this will not present an obstacle to reconstruction). We shall show, provided $q$ is suitably large, that all windows with respect to $\JJ(n,q)$ are trivial with high probability; the aim of this section is to establish the following lemma.

\begin{lemma}\label{lnhd}
If $q \ge \CC n$, then $\JJ(n,q)$ has the following property with high probability: for each $v \in [n]^2$, every $v$-window with respect to $\JJ(n,q)$ is trivial.
\end{lemma}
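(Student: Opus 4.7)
The proof proceeds via a union bound over vertices $v \in [n]^2$ and candidate $v$-windows. For each injection $f\colon [-k,k]^2 \to [n]^2$ with $f(0,0) = v$, Proposition~\ref{f-cost} gives $\P(f \text{ is feasible}) = q^{-\gamma(f)}$, so the goal is to show that $\sum_{v,f} q^{-\gamma(f)} = o(1)$, where the sum runs over $v$ and over non-trivial $v$-windows $f$.

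The first step is a structural reduction. If $(0,0), e_1, e_2, e_3, e_4$ all lie in a single block of $f$, then that block's translation is forced by $f(0,0)=v$, giving $f(e_i) = v + e_i$ for each $i$, and the window is automatically trivial with $v'=v$. More generally, if each neighbour $e_i$ lying outside the main block $B_0$ sits in a defect block whose boundary constraints with $B_0$ force all four coordinates of $J_{f(e_i)}$ to agree with those of $J_{v+e_i}$, the window is again trivial via $v'=v$. Hence a non-trivial window must contain a dual-connected component $A$ of $\ED_f$ that is incident to one of the four neighbour-edges of $(0,0)$ and that leaves at least one coordinate of some $J_{f(e_i)}$ "free" --- essentially, a defect block of size at least two meeting $\{e_1,e_2,e_3,e_4\}$, or two adjacent defect blocks overlapping the neighbourhood of $(0,0)$.

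The main counting estimate then parametrises bad windows by the dual-connected components of $\ED_f$. Polyomino-style enumeration (using that $\ED_f$ decomposes into dual-connected pieces by Proposition~\ref{block}) gives that the number of dual-connected subsets of $\ED$ of size $a$ containing a fixed edge is at most $\lambda^a$ for some absolute $\lambda$. Given the component structure, the block decomposition of $[-k,k]^2$ is essentially determined, and one further pays at most $n^2$ per block for the choice of translation (with corrections from injectivity). Combining with Proposition~\ref{p:iso}, one bounds $\gamma(f)$ from below by (a constant times) the total size of the witnessing components. The resulting sum has the shape $\sum_{a} \lambda^a (n^2)^{O(a)} q^{-\Omega(a)}$, which reduces to a geometric series in $(C'n/q)^{\Omega(a)}$ and is $o(n^{-2})$ per vertex once $q \ge \CC n$, so the union bound over $v \in [n]^2$ closes the argument.

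The principal obstacle will be extracting a lower bound on $\gamma(f)$ of the form $\gamma(f) \ge c \cdot a$ with $c$ strictly larger than the easy value $1/2$ coming from Proposition~\ref{gamma-bound}: with only that weak bound, the entropy $(n^2)^{m-1}$ from independent block translations is not beaten by $q^{-\gamma(f)}$ in the regime $q = O(n)$. Overcoming this requires a careful analysis of when distinct constraints in $\C{G}_f$ can share endpoints (which reduces $\gamma$) versus when they are in generic position, together with an enumeration tight enough to pay only $O(1)$ per block rather than $n^2$ in those non-generic configurations. Handling defect blocks that touch $\partial [-k,k]^2$, where the isoperimetric constant of Proposition~\ref{p:iso} degrades, is a further delicate sub-case; the enormous constant $\CC = \CC$ in the hypothesis is there to absorb both the polylogarithmic slack from the choice $k = \lceil \log n \rceil$ and the various geometric corrections that arise along the way.
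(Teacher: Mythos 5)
Your outline follows the same broad path as the paper's proof: reduce non-triviality of a window to a combinatorial witness inside $\ED_f$, enumerate the witnesses polyomino-style via Proposition~\ref{component}, and beat the enumeration with $q^{-\gamma}$ via Propositions~\ref{f-cost} and~\ref{gamma-bound}. However, the step you defer as ``the principal obstacle'' is the entire content of the proof, and the form of estimate you propose to aim for is provably insufficient. Since every $f$-split edge contributes exactly one edge to $\C{G}_f$, one always has $\gamma(f)\le|E(\C{G}_f)|\le a$; on the other hand, a dense $\ell\times\ell$ arrangement of singleton defect blocks has roughly $2\ell^2$ split edges while creating roughly $\ell^2$ blocks, so if you pay $n^2$ per block you would need $\gamma\ge(1+\eps)a$ to win --- impossible. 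Hence no bound of the form $\gamma(f)\ge c\cdot a$, for any admissible $c$, closes the union bound as you have set it up. The correct target is a \emph{two-parameter} bound: $\gamma=\Omega(a)$ to absorb the $\lambda^{a}$ shape entropy via the large constant $\CC$, and separately $\gamma\ge 2m'+\Omega(\text{rest})$, where $m'$ counts only the pieces for which one genuinely pays $n^2$.

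Achieving that requires three ingredients that your proposal does not supply. First, the union bound must be taken not over the full component structure of $\ED_f$ but over a \emph{single} dual-connected component $A$ containing a split edge at the origin (the paper's ``templates''); this keeps the polyomino count anchored, and it is also how one evades the overcounting caused by blocks nested inside holes of other blocks --- the very issue that makes a union bound over whole windows, or over all components simultaneously, too crude. Second, within one such component the $n^2$ payment must be charged per \emph{cluster} of quasiblocks whose images are mutually adjacent in $[n]^2$ (with only $O(k^6)$ per additional quasiblock inside a cluster), precisely because adjacent images share external boundary and the naive ``four fresh boundary edges per block'' gain does not add up in the dense configurations above. Third, one then needs a Jordan-curve/convexity argument showing that each cluster still contributes at least four fresh vertices to $\C{G}_h$, hence at least $2$ to $\gamma$, together with a separate treatment according to whether $A$ contains $\partial[-k,k]^2$, whose $8k+4$ edges are not split and must be discounted from both $|E(\C{G}_h)|$ and $|V(\C{G}_h)|$. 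You gesture at the last two difficulties but resolve neither; as written, the sum does not reduce to a geometric series in $n/q$, and the argument does not close.
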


\subsection{Templates}
To prove Lemma~\ref{lnhd}, it is natural to first attempt to use a union bound over all candidate injective maps from $[-k,k]^2$ to $[n]^2$; however, this turns out to be too crude for our purposes. The reason for this is roughly as follows: the number of candidate windows is artificially inflated by maps $f\colon [-k,k]^2 \to [n]^2$ with a large number of `holes'; more precisely, there exist too many candidate windows $f\colon [-k,k]^2 \to [n]^2$ with the property that one of the blocks of $f$ is contained entirely in the interior of another block of $f$. One could hope to address this issue by locally modifying a candidate window so as to remove such pairs of `nested blocks', but attempting to do so results in a situation where some tiles of the jigsaw end up getting used multiple times.

To circumvent the difficulties outlined above, we introduce the notion of a `template'. To introduce this notion, it will be helpful to first have some notation.

Let $A \subset \EDB([-k,k]^2)$ be a set of edges of the grid. Recall that $D([-k,k]^2, A)$ is the graph on $[-k,k]^2$ whose edge set is $\ED([-k,k]^2) \setminus A$. For any connected component $X \subset [-k,k]^2$ of $D([-k,k]^2, A)$, we define the \emph{quasiblock $\hat X$} associated with $X$ to be the set of vertices of $X$ incident to some edge in $A$; in the sequel, when we refer to a quasiblock $\hat X$ of $A$, we implicitly assume that the corresponding connected component of $D([-k,k]^2, A)$ is denoted by $X$. Finally, we write $A^*$ for the set $A \cap \ED([-k,k]^2)$.

For $v \in [n]^2$, a \emph{$v$-template} is a pair $(A,h)$, where $A \subset \EDB([-k,k]^2)$ and $h$ is an injective map from the union of the quasiblocks of $A$ to $[n]^2$, such that

\begin{enumerate}
	\item $A$ contains at least one edge incident to $(0,0)$,
	\item $A$ does not consist of precisely the four edges incident either to $(0,0)$ or one of its four neighbours,
	\item $A$ is dual-connected,
	\item $h(0,0) = v$,
	\item either $\partial [-k,k]^2 \subset A$ or $\partial [-k,k]^2 \cap A = \emptyset$,
	\item each quasiblock of $A$ is $h$-rigid, and
	\item each edge of $A^*$ is $h$-split.
\end{enumerate}

Given an $(n,q)$-jigsaw $J$, we abuse notation slightly and say that a $v$-template $(A,h)$ is feasible for $J$ if $h$ is feasible for $J$. The definition of a template is motivated by the following fact.

\begin{proposition}\label{template}
Let $J$ be an $(n,q)$-jigsaw and let $v \in [n]^2$. If there exists a nontrivial $v$-window $f$ with respect to $J$, then there exists a $v$-template $(A,h)$ that is feasible for $J$.
\end{proposition}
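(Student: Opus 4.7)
The strategy is, starting from a nontrivial $v$-window $f\colon [-k,k]^2 \to [n]^2$, to select a dual-connected component $A$ of $\ED_f$ containing an $f$-split edge incident to $(0,0)$, and to define $h$ as the restriction of $f$ to the union of the quasiblocks of $A$. Most of the seven conditions in the definition of a $v$-template follow essentially by construction; the subtle point is condition~(2).

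To see such an $A$ exists, note first that nontriviality of $f$ forces at least one of the four edges incident to $(0,0)$ to be $f$-split. Indeed, if all four were $f$-unsplit, then $(0,0)$ and each $e_i$ would lie in a common $f$-block, so $f(e_i) = v + e_i$ for each $i$, and $f$ would be trivial via $v' = v$. Let $A$ be the dual-connected component of $\ED_f$ containing some $f$-split edge at $(0,0)$; this yields (1) and (3), and (4) is immediate. By Proposition~\ref{block}, each quasiblock of $A$ is contained in a single $f$-block, so $h$ is rigid on every quasiblock, yielding (6). Every edge in $A^*$ lies in $\ED_f$ and is therefore $f$-split, so its endpoints lie in different $f$-blocks, hence in different quasiblocks with different $h$-displacements, giving (7). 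Since $\partial [-k,k]^2$ is itself dual-connected---consecutive boundary edges share the dual face of a unit square along each side and at each corner---it lies in a single dual-connected component of $\ED_f$, so $A$ either contains all of $\partial [-k,k]^2$ or is disjoint from it, giving (5). Feasibility of $h$ for $J$ is inherited from $f$.

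The remaining step, and the main obstacle, is verifying (2). Suppose $A$ equals exactly the four edges incident to $(0,0)$. Since $A$ is a dual-connected component of $\ED_f$, every edge dual-adjacent to $A$ but not in $A$ fails to be $f$-split; tracing these unsplit edges around the four unit squares incident to $(0,0)$ shows that $e_1,\ldots,e_4$ together with the four diagonal neighbours $(\pm 1, \pm 1)$ of $(0,0)$ all share a single $f$-block, so the axial neighbours have a common $f$-offset $w$, i.e., $f(e_i) = w + e_i$ for each $i$. The four feasibility constraints from the split edges around $(0,0)$ then read $J(v, v+e_i) = J(w, w+e_i)$ for $i = 1,\ldots,4$, so $J_w = J_v$ and $f$ is trivial with witness $v' = w$, a contradiction. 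A parallel dual-adjacency analysis rules out $A$ being the four edges around some axial neighbour $e_j$ of $(0,0)$: in that case $f$ acts as the identity shift on $[-k,k]^2 \setminus \{e_j\}$, and the four split-edge constraints around $e_j$ match the four colours of the tile at $f(e_j)$ to the corresponding colours of $J_{v+e_j}$, so $J_{f(e_j)} = J_{v+e_j}$ and $f$ is trivial via $v' = v$. Thus (2) holds, completing the construction of the $v$-template $(A, h)$.
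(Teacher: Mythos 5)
Your overall construction --- take $A$ to be the dual-connected component of an $f$-split edge incident to the origin and let $h$ be the restriction of $f$ to the quasiblocks of $A$ --- is exactly the paper's, and your verifications of conditions (1), (3)--(7) and of feasibility are correct. The gap is in your verification of condition (2), which is precisely the point the paper compresses into ``it is easy to check''.

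The problem is your second case. Suppose the component $A$ of your chosen split edge is exactly the four edges incident to $e_j$. The dual-adjacency argument does show that the eight edges forming the cycle around $e_j$ (the boundary of the $2\times 2$ block of faces at $e_j$) are unsplit, so the eight vertices at $\ell^\infty$-distance one from $e_j$ lie in a single block of $f$ containing $(0,0)$; hence $f(x)=v+x$ on that set, and the four constraints across the edges of $A$ give $J_{f(e_j)}=J_{v+e_j}$. But your assertion that ``$f$ acts as the identity shift on $[-k,k]^2\setminus\{e_j\}$'' does not follow: the vertex $-e_j$ is \emph{not} on that eight-cycle, and the edge $\{(0,0),-e_j\}$ is not dual-adjacent to $A$, so it may well be $f$-split and lie in some other, entirely different component, with $f(-e_j)$ far from $v-e_j$. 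Triviality via $v'=v$ requires $J_{f(e_i)}=J_{v+e_i}$ for \emph{all four} $i$, and your argument only establishes this for $i=j$ and for the two directions perpendicular to $e_j$. So the contradiction you derive in Case 2 is not valid: for a nontrivial window, the component of the particular split edge you happened to pick really can be the four edges around a neighbour of the origin.

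The correct argument treats all four edges at the origin simultaneously. Suppose every edge $\{(0,0),e_i\}$ is either unsplit or has a degenerate component. If some such component is the four edges around $(0,0)$, your Case 1 applies verbatim and $f$ is trivial with $v'=w$. Otherwise, for each $i$, either $e_i$ lies in the block of $(0,0)$ (so $J_{f(e_i)}=J_{v+e_i}$ directly), or the component of $\{(0,0),e_i\}$ is the four edges around $e_i$ (so $J_{f(e_i)}=J_{v+e_i}$ by the computation in your Case 2 applied to that $i$); note a component containing $\{(0,0),e_i\}$ cannot be the four edges around $e_{i'}$ for $i'\neq i$. In every case $(J_{f(e_i)})_{i=1}^4=(J_{v+e_i})_{i=1}^4$ and $f$ is trivial with $v'=v$, a contradiction. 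Hence \emph{some} split edge at the origin has a non-degenerate component, and you must take $A$ to be that component rather than the component of an arbitrary split edge at the origin.
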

\begin{proof}
Since any tile is uniquely determined by its four neighbours in any valid arrangement of tiles, it is easy to check using the fact that $f$ is a nontrivial $v$-window that there exists an $f$-split edge $z$ incident to $(0,0)$ with the property that the dual-connected component of $z$ in $\ED_f$ does not consist of precisely the four edges incident either to $(0,0)$ or one of its four neighbours. We now take $A$ to be the dual-connected component of $z$ in $\ED_f$ and $h$ to be the restriction of $f$ to the endpoints of $A$ in $[-k,k]^2$.

Clearly, $A$ contains at least one edge incident to $(0,0)$, does not consist of precisely the four edges incident either to $(0,0)$ or one of its four neighbours, and is dual-connected. As $f$ is a $v$-window that extends $h$, we have $h(0,0) = v$. Next, since $A$ is a dual-connected component of $\ED_f$ and $\partial [-k,k]^2$ is a dual-connected subset of $\ED_f$, either $\partial [-k,k]^2 \subset A$ or $\partial [-k,k]^2 \cap A = \emptyset$. Furthermore, it follows from Proposition~\ref{block} that every quasiblock of $A$ is a subset of a single block of $f$; since $f$ extends $h$, it follows that every quasiblock of $A$ is $h$-rigid. Finally, since each edge of $A^*$ is $f$-split, each edge of $A^*$ must also be $h$-split.
\end{proof}

We shall prove Lemma~\ref{lnhd} using a union bound over templates as opposed to windows; in particular, we shall show, provided $q$ is suitably large, that with sufficiently high probability, no $v$-template $(A,h)$ is feasible for $\JJ(n,q)$.

We say that a template $(A,h)$ is \emph{large} if $\partial [-k,k]^2 \subset A$, and \emph{small} if $A \cap \partial [-k,k]^2 = \emptyset$. Of course, every template is either large or small. We shall require slightly different arguments to deal with large and small templates. The following fact will prove useful when estimating the number of templates of both types; see Problem~45 in~\citep{coffee}, for instance.

\begin{proposition}\label{component}
	In a graph of maximal degree $\Delta$, the number of connected induced subgraphs with $l + 1$ vertices, one of which is a given vertex, is at most $ (e(\Delta - 1))^l$. \qed
\end{proposition}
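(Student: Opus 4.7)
The plan is to set up a canonical spanning-tree encoding of each connected subgraph and count the resulting trees by a generating-function argument together with the cycle lemma.

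First, fix once and for all an arbitrary ordering of the neighbours of every vertex of $G$. For each connected induced subgraph $S$ containing the distinguished vertex $v$ with $|S| = l + 1$, I would run a depth-first search from $v$ inside $G[S]$, at each step advancing to the earliest unvisited neighbour in the fixed order. This produces a canonical rooted plane spanning tree $T_S$ of $G[S]$: the map $S \mapsto T_S$ is injective because the vertex set of $T_S$ equals $S$, and each non-root vertex of $T_S$ has at most $\Delta - 1$ children, since one of its $\le \Delta$ neighbours in $G$ is already its parent.

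Next, I would encode each such $T_S$, together with its embedding into $G$, by recording for every vertex $u$ in depth-first order the subset of $u$'s non-parent neighbours that appear as children in $T_S$. The number of such encodings, ignoring any feasibility constraint, is the coefficient of $x^l$ in $(1+x)^{(\Delta-1)(l+1)}$, namely $\binom{(\Delta-1)(l+1)}{l}$. The additional requirement that the sequence of child-counts actually be depth-first realisable is of ballot type, so the cycle lemma cuts the count by a factor of $l + 1$; combined with the elementary estimate $\binom{n}{k} \le (en/k)^k$, this delivers a bound of the form $(e(\Delta-1))^l$.

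The step I expect to be the main obstacle is the mild asymmetry at the root, which may carry up to $\Delta$ rather than $\Delta - 1$ children. A direct accounting inflates one binomial factor by $\Delta/(\Delta-1)$, but this is easily absorbed either by attaching a phantom parent edge to $v$ so that the root falls into line with every other vertex of the enlarged tree, or by noting that the polynomial-in-$l$ slack from the cycle-lemma reduction swallows the discrepancy, so that the sharp $(e(\Delta-1))^l$ bound is preserved; the tiny cases $l \in \{0,1\}$ are then checked directly.
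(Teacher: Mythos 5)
The paper does not actually prove this proposition --- it is stated with a reference to Problem~45 of \emph{The Art of Mathematics: Coffee Time in Memphis} --- so there is no in-paper argument to compare against; your proposal is essentially the standard proof that the citation points to (canonical DFS spanning tree, injective encoding by child-sets in preorder, cycle lemma, then $\binom{n}{k}\le(en/k)^k$), and its skeleton is sound. The key points you get right are that the map $S\mapsto T_S$ is injective because an induced subgraph is determined by its vertex set, and that after re-encoding each child-set as a subset of an abstract $(\Delta-1)$-element index set the weight $\prod_i\binom{\Delta-1}{c_i}$ becomes invariant under cyclic rotation of the size sequence, which is exactly what legitimises the Dvoretzky--Motzkin step and yields $\tfrac{1}{l+1}\binom{(\Delta-1)(l+1)}{l}\le\tfrac{(1+1/l)^l}{l+1}(e(\Delta-1))^l\le(e(\Delta-1))^l$.

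Two of your patches for the root, however, do not work as stated. First, attaching a phantom parent to $v$ does not bring the root into line: in the enlarged tree $v$ acquires a parent but retains all $\Delta$ of its original neighbours as potential children, so its child-capacity is still $\Delta$ rather than $\Delta-1$, and the asymmetry (and the breakdown of rotation-invariance in the cycle lemma) persists. Second, the claim that the inflation is a factor $\Delta/(\Delta-1)$ absorbed by checking only $l\in\{0,1\}$ is too optimistic: the honest ratio $\binom{(\Delta-1)(l+1)+1}{l}/\binom{(\Delta-1)(l+1)}{l}$ is roughly $(\Delta-1)/(\Delta-2)$, is unbounded when $\Delta=2$, and even for $\Delta\ge 3$ the resulting bound $\tfrac{2e}{l+1}(e(\Delta-1))^l$ forces you to check all $l\le 5$ by hand, not just $l\le 1$. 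A clean repair is to dominate the count by the number of $(l+1)$-vertex subtrees of the infinite $\Delta$-regular tree containing the root and compute that exactly by Lagrange inversion applied to $B=1+xB^{\Delta-1}$, which gives $\tfrac{\Delta}{l}\binom{(\Delta-1)(l+1)}{l-1}$; this quantity is readily checked to be at most $(e(\Delta-1))^l$ for all $l\ge 1$ and $\Delta\ge 2$, and it also disposes of the $\Delta=2$ degeneracy.
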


\subsection{Large templates}
We will need an estimate for the number of large templates, as well as an estimate for the probability that such a template is feasible for $\JJ(n,q)$.

In order to simplify our bookkeeping, it will be helpful to introduce the notion of a `cluster'. Let $(A,h)$ be a large $v$-template. For a quasiblock $\hat X \subset  [-k,k]^2$ of $A$, let $h(\hat X) \subset [n]^2$ denote the (rigid) image of $\hat X$ under $h$. Let us define the \emph{cluster graph} of $(A,h)$ to be the graph on the quasiblocks of $A$ where two quasiblocks $\hat X$ and $\hat Y$ are adjacent if there exists an edge of the lattice between $h(x)$ and $h(y)$ for some $x \in \hat X$ and $y \in \hat Y$ and furthermore, this edge belongs to the external boundary of both $h(\hat X)$ and $h(\hat Y)$. A \emph{cluster} of $(A,h)$ is then a subset of $[n]^2$ consisting of the images of all the quasiblocks in a connected component of the cluster graph.

For non-negative integers $\delta$, $r_1$ and $r_2$, we say that a large $v$-template $(A,h)$ is of type $(\delta, r_1, r_2)$ if $|A| = \delta$, the number of quasiblocks of $A$ is $r_1 + r_2$, and the number of clusters of $(A,h)$ is $r_1$. Writing $N_l(\delta, r_1, r_2)$ for the number of large  $v$-templates of type $(\delta, r_1, r_2)$, we have the following estimate.

\begin{proposition}\label{ltemp-count}
For non-negative integers $\delta$, $r_1$ and $r_2$, we have
\[
N_l(\delta, r_1, r_2) =
\begin{cases}
	0 &\mbox{if } \delta < 8k+4 \text{ or } \delta < r_1 + r_2, \text{ and}\\
	O(30 ^ \delta n^{2r_1} k ^ {6 r_2} / n^2) &\mbox{otherwise}.\\
\end{cases}
\]
\end{proposition}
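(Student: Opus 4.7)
My approach is to dispatch the vanishing conditions first, then bound $N_l(\delta,r_1,r_2)$ in the remaining regime by enumerating templates via three successive choices: the set $A$, the placement of one representative quasiblock per cluster, and the placement of each remaining quasiblock through cluster-edge data.

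The vanishing conditions are elementary. The bound $\delta \ge 8k+4$ is forced by $\partial[-k,k]^2 \subset A$, which contributes $4(2k+1) = 8k+4$ edges. For the other condition, a cut-counting argument shows that $D([-k,k]^2,A)$ has at most $1 + |A^*| = 1 + \delta - (8k+4) = \delta - 8k - 3$ connected components, hence at most that many quasiblocks, so $r_1 + r_2 \le \delta - 8k - 3 < \delta$, ruling out $\delta < r_1 + r_2$.

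For the main bound I would argue as follows. \emph{Stage (a)}: choose $A$. Since $A$ is dual-connected and contains, say, a prescribed boundary edge, and since the adjacency graph on $\ED$ (edges adjacent iff sharing a face) has maximum degree at most $6$, Proposition~\ref{component} gives at most $(5e)^{\delta-1} \le 14^\delta$ choices; this then determines the quasiblocks of $A$ combinatorially. \emph{Stage (b)}: place cluster representatives. Since each quasiblock is $h$-rigid, $h$ restricted to it is a translation. Designate one representative per cluster; the representative of the cluster containing $(0,0)$ has its translation forced by $h(0,0) = v$, while each of the remaining $r_1 - 1$ representatives has at most $n^2$ possible translations into $[n]^2$, contributing a total factor of $n^{2(r_1-1)} = n^{2r_1}/n^2$. \emph{Stage (c)}: place the remaining $r_2$ non-representative quasiblocks. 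Picking a canonical spanning tree of each cluster (rooted at its representative), every non-representative $\hat Y$ is joined by a cluster edge to a parent $\hat X$ already placed; I would enumerate this edge by choosing the parent (at most $r_1 + r_2 - 1 \le (2k+1)^2 = O(k^2)$ options, since every quasiblock lies in $[-k,k]^2$), a vertex of $\hat X$ ($O(k^2)$ options), a direction ($4$ options), and a vertex of $\hat Y$ ($O(k^2)$ options). Together these pin down the translation of $\hat Y$, giving $O(k^6)$ per non-representative and $O(k^{6 r_2})$ in total.

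Combining the three stages yields $N_l(\delta, r_1, r_2) \le 14^\delta \cdot n^{2r_1-2} \cdot O(k^{6r_2})$, and absorbing the polynomial prefactors into the exponential base (at the cost of inflating $14$ to $30$) gives the stated bound. The main obstacle is Stage (c): the naive bound of $n^2$ translations per non-representative would be disastrous, and the gain to $O(k^6)$ per non-representative rests critically on the cluster structure forcing adjacent quasiblocks into a locally constrained relative position determined by data living entirely inside $[-k,k]^2$. The remaining subtlety is verifying that every template is captured at least once by some spanning-tree parent and cluster edge, which is immediate since clusters are by definition connected in the cluster graph.
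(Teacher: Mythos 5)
Your proposal is correct and follows essentially the same route as the paper: anchor the dual-connected set $A$ via Proposition~\ref{component} (the paper gets $4(5e)^{\delta-1}\le 15^\delta$ by anchoring at $(0,0)$, you get $14^\delta$ by anchoring at a prescribed boundary edge), pay $n^2$ per cluster representative other than the one containing $(0,0)$, and pay only $k^{O(1)}$ per remaining quasiblock by exploiting that its image is pinned down, up to $O(k^6)$ choices, by a cluster edge to an already-placed quasiblock (your spanning-tree formulation of this last step is if anything a little cleaner than the paper's distance argument). The one accounting step you leave implicit is the choice of \emph{which} $r_1$ of the $r_1+r_2$ quasiblocks serve as cluster representatives --- a factor of at most $\binom{r_1+r_2-1}{r_1-1}\le 2^\delta$ that the paper includes explicitly, and which is precisely the kind of term the slack between your $14^\delta$ and the stated $30^\delta$ is there to absorb.
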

\begin{proof}
We estimate the number of large $v$-templates $(A,h)$ of type $(\delta, r_1, r_2)$ by first estimating the number of ways in which we may choose $A$, and then estimating the number of ways in which we may choose $h$ once we are given $A$.

First, we may assume that $\delta \ge 8k+4$ since if $(A,h)$ is a large $v$-template, then $\partial [-k,k]^2 \subset A$ by definition. Second, we may also suppose that $\delta \ge r_1 + r_2$; indeed, by considering a northern most vertex of each quasiblock of $A$ for example, we observe that the number of quasiblocks of $A$ is at most the size of $A$, so the claimed bound holds trivially in the case where $\delta < r_1 + r_2$.

We now estimate the number of ways to choose $A$. Since $A$ must contain an edge incident to $(0,0)$ and must additionally be dual-connected, it follows from Proposition~\ref{component} that the number of choices for $A$ (even ignoring the restriction that $A$ has precisely $r_1 + r_2$ quasiblocks) is at most $4 (5e)^{\delta - 1} \le 15 ^ \delta$ as each edge of the square lattice is adjacent to six other edges of the square lattice in the planar dual of the lattice.

Next, we estimate the number of ways to choose $h$ for a given $A$. Once we fix an $A$ with $r_1 + r_2$ quasiblocks, it suffices to specify the image of one vertex from each quasiblock of $A$ under $h$ to completely specify $h$ since each quasiblock of $A$ is $h$-rigid. We count the number of ways to choose $h$ as follows. We first choose $r_1$ \emph{representative} quasiblocks in such a way that these quasiblocks all belong to different clusters, while ensuring that the quasiblock containing $(0,0)$ is one of these representatives; the number of ways to choose these representatives is at most
\[\binom{r_1 + r_2 - 1}{r_1 - 1} \le 2^{r_1 + r_2} \le 2^\delta.\]
Of course, since $h(0,0) = v$, this specifies the image of the quasiblock containing $(0,0)$. We then specify the image of a vertex (say the northernmost) from each of the remaining $r_1 -1$ representative quasiblocks; this may be done in $n^{2(r_1 - 1)}$ ways. Finally, we note that there are $O(k^6)$ choices for the image of one of the $r_2$ leftover quasiblocks. To see this, note that each leftover quasiblock belongs to the same cluster as one of the representative quasiblocks, so the image of such a leftover quasiblock must be at distance at most $(2k+1)^2$ from the image of one of the representative quasiblocks; the claimed bound follows since there are at most $(2k+1)^2$ points contained in the representative quasiblocks, and there are at most $(2d + 1)^2$ points at distance at most $d$ from any fixed point of the grid. Combining these estimates, we see that the number of choices for $h$ once we have specified $A$ is $O(2^\delta n^{2(r_1-1)} k ^ {6 r_2} )$.

It now follows that
\[N(\delta, r_1, r_2) = O(15^\delta 2^\delta n^{2(r_1-1)} k ^ {6 r_2}) = O(30 ^ \delta n^{2r_1} k ^ {6 r_2} / n^2). \qedhere\]
\end{proof}

To estimate the probability that a large $v$-template $(A,h)$ is feasible for $\JJ(n,q)$, we shall appeal to Proposition~\ref{f-cost} which gives us a bound for this probability in terms of $\gamma(h)$; recall that $\gamma(h)$ is the difference between the size of the vertex set of $\C{G}_h$ and the number of connected components of $\C{G}_h$, where $\C{G}_h$ is the constraint graph of $h$.

\begin{proposition}\label{ltemp-bound}
If $(A,h)$ is a large $v$-template of type $(\delta, r_1, r_2)$, then we have $\gamma(h) \ge \delta/ 20$ and $\gamma(h) \ge 2r_1 + r_2/2 - 2r_1/(2k+1)$.
\end{proposition}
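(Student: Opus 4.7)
My plan is to prove the two bounds separately. The first follows fairly directly from Proposition~\ref{gamma-bound} together with the dual-connectedness of $A$; the second requires a more intricate cluster-by-cluster accounting.

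For the first bound, $\gamma(h)\ge\delta/20$, I would proceed as follows. Condition~(7) in the definition of a template ensures that every edge of $A^*$ contributes a constraint, so $|E(\C{G}_h)|=|A^*|$; and since $(A,h)$ is large, $\partial[-k,k]^2\subset A$, giving $|A^*|=\delta-(8k+4)$. Proposition~\ref{gamma-bound} then yields $\gamma(h)\ge|A^*|/2$. To conclude, I only need to argue $|A^*|\ge\delta/10$, i.e., $\delta\ge 10(8k+4)/9$. This follows from the dual-connectedness of $A$: since $A$ contains some edge incident to $(0,0)$ (condition~1) and also contains all of $\partial[-k,k]^2$, it must contain a dual path joining the two, and this path has length at least $k$ (a dual step changes Euclidean position by at most one unit, while the distance from the origin to the boundary is $k$), essentially all of whose edges lie in $A^*$. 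Hence $|A^*|\ge k-O(1)$, giving $\delta\ge 9k+O(1)\ge 10(8k+4)/9$ for $k$ sufficiently large.

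For the second bound, I would show cluster by cluster that each cluster $C$ with $q_C$ quasiblocks contributes at least $2+(q_C-1)/2-2/(2k+1)$ to $\gamma(h)$. Summing over the $r_1$ clusters and using $\sum_C q_C=r_1+r_2$ then yields the stated inequality. The key structural input is that different clusters occupy disjoint regions of $\C{G}_h$: by the very definition of a cluster, the images of distinct clusters in $[n]^2$ do not share external boundary edges, and this forces the corresponding constraint-graph vertices to be disjoint as well. Within a single cluster, I would extract the base contribution of $2$ from isoperimetric input (Proposition~\ref{p:iso}) applied to the cluster's image in $[n]^2$: this image has a nontrivial external boundary, and distinct edges on this boundary contribute as vertices lying in distinct components of $\C{G}_h$. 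Each additional quasiblock joined to the cluster via a shared boundary edge in $[n]^2$ then forces at least one new constraint whose vertices are not fully merged into the existing ones, yielding the $(q_C-1)/2$ term. The negative correction $-2/(2k+1)$ absorbs boundary effects arising when a cluster's image is very small.

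The main obstacle is the cluster-level bookkeeping in the second bound. Correctly attributing vertices and connected components of $\C{G}_h$ to individual clusters, and carefully handling constraints whose two endpoint-vertices live in different clusters (these must be shared between two clusters' accountings), will require precise case analysis; getting the constants $2$, $1/2$, and $2/(2k+1)$ exactly right is delicate. I expect Proposition~\ref{block} (guaranteeing that a quasiblock's incidences with a dual-connected component lie in a single block) and Proposition~\ref{p:iso} to do most of the geometric heavy lifting.
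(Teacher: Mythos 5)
Your first bound goes through and is essentially the paper's argument: count constraints via $|E(\C{G}_h)|\ge|A^*|=\delta-(8k+4)$, use dual-connectedness to get $|A^*|\ge k$ and hence $\delta\ge 9k+O(1)$, and apply Proposition~\ref{gamma-bound}. No issues there.

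Your plan for the second bound has a genuine gap, and it is exactly at the point you flag as delicate. A vertex of $\C{G}_h$ is the image of an $h$-split edge, so an edge of $\partial_e h(\hat X)$ is a vertex of $\C{G}_h$ only if the corresponding edge of $\partial_e\hat X$ lies in $A^*$; for a large template the edges of $\partial_e\hat X$ corresponding to $\partial[-k,k]^2$ (there are $8k+4$ of these in total) contribute nothing. So whatever boundary count $|T|$ you assemble cluster by cluster, you only get $|V(\C{G}_h)|\ge |T|-(8k+4)$, i.e.\ a loss of $4k+2$ in $\gamma(h)$. Your per-cluster correction of $-2/(2k+1)$ sums to $-2r_1/(2k+1)$, which is far too small to absorb a loss of $4k+2$ unless $r_1=\Omega(k^2)$; and your base contribution of $2$ per cluster comes only from $|\partial_e K|\ge 4$, which yields $|T|\ge 4r_1+r_2$ and hence only $\gamma(h)\ge 2r_1+r_2/2-(4k+2)$ --- useless when $r_1$ is small. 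In short, the claimed uniform per-cluster contribution $2+(q_C-1)/2-2/(2k+1)$ is not provable: some cluster must contribute an extra $\approx 4k+2$ to cancel the window-boundary loss.

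The paper's fix is the step your outline omits. Since $\partial[-k,k]^2\subset A$, each quasiblock $\hat X$ is the full vertex boundary of its component $X$, and a Jordan-curve argument shows that $\partial_e K$ for a cluster $K$ equals the external boundary of the union of the \emph{entire} components $X_1,\dots,X_m$, not merely of the quasiblocks. Proposition~\ref{p:iso} then gives $|\partial_e K|\ge 4\sqrt{|X_1|+\dots+|X_m|}$, and because the components partition $[-k,k]^2$ the cluster areas $a_1,\dots,a_{r_1}$ sum to $(2k+1)^2$. A convexity argument shows $\sum_i 4\sqrt{a_i}$ is minimised when one $a_i$ is huge and the rest equal $1$, giving $|T|\ge 4(r_1-1)+4\sqrt{(2k+1)^2-(r_1-1)}+r_2$; the term $4\sqrt{(2k+1)^2-(r_1-1)}\ge 8k+4-4r_1/(2k+1)$ is precisely what cancels the $-(8k+4)$ loss, and is the true source of the $-2r_1/(2k+1)$ in the statement. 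You would need to incorporate this global area accounting (or an equivalent) to make your second bound work; the purely local cluster-by-cluster ledger cannot produce it.
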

\begin{proof}
We shall use Proposition~\ref{gamma-bound} to bound $\gamma(h)$ from below. We will estimate the size of both the vertex set and the edge set of $\C{G}_h$.

Since $\C{G}_h$ contains one edge for each $h$-split edge, it is easy to see that the edge set of $\C{G}_h$ has size at least $A^*$, so $ |E(\C{G}_h)| \ge|A^*|  = |A| - (8k + 4)$ as $|\partial [-k,k]^2| = 8k+4$. Now, since $A$ contains an edge incident to $(0,0)$, is dual-connected and also contains $ \partial [-k,k]^2$, we have $|A^*| \ge k$ and consequently, $|A| \ge 9k + 4$; it follows, provided $k$ is sufficiently large, that $ |E(\C{G}_h)| \ge |A| - (8k + 4) \ge |A|/10 = \delta/10$. We now conclude from Proposition~\ref{gamma-bound} that $\gamma(h) \ge |E(\C{G}_h)|/2 \ge \delta/20$.

To estimate the size of the vertex set of $\C{G}_h$, we begin with the following observation. First, if $\hat X$ is a quasiblock of $A$, then since $\hat X$ is $h$-rigid, there is a one-to-one correspondence between $\partial_e \hat X$ and $\partial_e h(\hat X)$. Next, note that each edge of $\partial_e \hat X$ is either an element of $A^*$ (and consequently $h$-split) or an element of $\partial [-k,k]^2$. It now follows that each edge of $\partial_e h(\hat X)$ that corresponds to an  edge of $\partial_e \hat X$ contained in $A^*$ must belong to the vertex set of $\C{G}_h$.

For a cluster $K$ of the template $(A,h)$ composed of the images of the quasiblocks $\hat X_1, \hat X_2, \dots, \hat X_m$, we write $S(K)$ for the set of edges between $\hat X_i$ and $\hat X_j$ for some $1 \le i < j \le m$ and $T(K)$ for the set $S(K) \cup \partial_e K$. First, it is clear that $S(K)$ and $\partial_e K$ are disjoint for each cluster $K$. Furthermore, it is also easy to see that if $K_1$ and $K_2$ are distinct clusters, then the sets $T(K_1)$ and $T(K_2)$ are disjoint. Let $T \subset \EDB([n]^2)$ denote the union of the sets $T(K)$, where $K$ runs over the $r_1$ clusters of $(A,h)$. From our earlier discussion, it follows that an edge of $T$ is a vertex of $\C{G}_h$ unless it corresponds to an edge in $\partial [-k,k]^2$. Consequently, we have $|V(\C{G}_h)| \ge |T| - (8k + 4)$.

We now use an isoperimetric argument to bound $|T|$ from below; we begin with following observation.

\begin{claim}\label{jcurve}
For a cluster $K$ of $(A,h)$ composed of the images of the quasiblocks $\hat X_1, \hat X_2, \dots, \hat X_m$, we have
\[
|T(K)| \ge 4\sqrt{|X_1| + |X_2| + \dots + |X_m|} + m - 1.
\]
\end{claim}
\begin{proof}
It immediately follows from the fact that $K$ corresponds to a connected component of size $m$ in the cluster graph of $(A,h)$ that $|S(K)| \ge m-1$. Next, while Proposition~\ref{p:iso} immediately tells us that
\[|\partial_e K| \ge 4\sqrt{|\hat X_1| + |\hat X_2| + \dots + |\hat X_m|},\] we may get a better estimate as follows. Note that since $\partial [-k,k]^2 \subset A$, the quasiblock $\hat X$ of $A$ associated with a connected component $X$ of $D([-k,k]^2, A)$ is in fact the vertex boundary of $X$. Therefore, it follows from the Jordan curve theorem that each quasiblock of $A$ must divide the plane into an exterior and an interior region. From the definition of a cluster, it follows that $h(\hat X_i)$ lies in the exterior of $h(\hat X_j)$ for all $i \ne j$. Consequently, it follows that $\partial_e K$ is in fact the external boundary of a set of size $|X_1| + |X_2| + \dots + |X_m|$; therefore, we have
\[|\partial_e K| \ge 4\sqrt{|X_1| + |X_2| + \dots + |X_m|}.\]
The claim follows since $S(K)$ and $\partial_e K$ are disjoint.
\end{proof}

By summing the bound from Proposition~\ref{jcurve} over the $r_1$ clusters of $(A,h)$, we obtain a bound of the form
\[ |T| \ge 4\sqrt{a_1} + 4\sqrt{a_2} + \dots + 4\sqrt{a_{r_1}} + r_2\]
for some collection of positive integers $a_1, a_2, \dots, a_{r_1}$ satisfying $a_1 + a_2 + \dots + a_{r_1} = (2k+1)^2$; this is immediate once we note that each connected component of $D([-k,k]^2,A)$ contributes precisely once to the bound in Proposition~\ref{jcurve} as we run over the clusters of $(A,h)$. We conclude, using convexity, that
\[ |T| \ge 4(r_1 - 1) + 4\sqrt{(2k+1)^2 - (r_1 - 1)} + r_2 \ge 4r_1 + r_2 + 8k+4 - \frac{4r_1}{(2k+1)}.\]
We know from Proposition~\ref{gamma-bound} that $\gamma(h) \ge |V(\C{G}_h)|/2 \ge |T|/2 - (4k+2)$; it now follows that $\gamma(h) \ge 2r_1 + r_2 - 2r_1/(2k+1)$.
\end{proof}

\subsection{Small templates}
We shall handle small templates using arguments similar to those used to deal with large templates; however, some small subtleties necessitate a slightly different approach to bookkeeping. If $(A,h)$ is small $v$-template, then it may well be the case that $|A|$ is small, so our estimates need to be capable of handling this; this cannot happen when $(A,h)$ is large since $\partial [-k,k]^2 \subset A$ in this case. On the other hand, if $(A,h)$ is small, then since $\partial [-k,k]^2 \cap A = \emptyset$, we do not need to worry about overcounting contributions from $\partial [-k,k]^2$ when estimating $\gamma(h)$. We will modify the arguments we used to deal with large templates slightly in order to balance these considerations.

Let $(A,h)$ be a small $v$-template. Since $A \cap \partial [-k,k]^2 = \emptyset$, it is easy to verify that the vertex boundary of $[-k,k]^2$ is contained in a single connected component of $D([-k,k]^2, A)$; we call the quasiblock corresponding to this connected component the \emph{boundary quasiblock} of $A$, and refer to the other quasiblocks of $A$ as \emph{non-boundary quasiblocks}.

We will need a slight modification of the notion of a `cluster' that distinguishes between the boundary quasiblock and non-boundary quasiblocks. Let $(A,h)$ be a small $v$-template and as before, for a quasiblock $\hat X \subset  [-k,k]^2$ of $A$, let $h(\hat X) \subset [n]^2$ denote the (rigid) image of $\hat X$ under $h$. Let us define the \emph{cluster graph} of $(A,h)$ to be the graph on the quasiblocks of $A$ where
\begin{enumerate}
\item two non-boundary quasiblocks $\hat X$ and $\hat Y$ are adjacent if there exists an edge of the square lattice between $h(x)$ and $h(y)$ for some $x \in \hat X$ and $y \in \hat Y$ and furthermore, this edge belongs to the external boundary of both $h(\hat X)$ and $h(\hat Y)$, and
\item the boundary quasiblock $\hat X$ and a non-boundary quasiblock $\hat Y$ are adjacent if there exists an edge of the square lattice between $h(x)$ and $h(y)$ for some $x \in \hat X$ and $y \in \hat Y$ and furthermore, this edge belongs to the internal boundary of $h(\hat X)$ and the external boundary of $h(\hat Y)$.
\end{enumerate}
A \emph{cluster} of $(A,h)$ is then a subset $[n]^2$ consisting of the images of all the quasiblocks in a connected component of the cluster graph; again, we call the cluster containing the image of the boundary quasiblock the \emph{boundary cluster} and refer to the other clusters as \emph{non-boundary clusters}.

As before, for non-negative integers $\delta$, $r_1$ and $r_2$, we say that a small $v$-template $(A,h)$ is of type $(\delta, r_1, r_2)$ if $|A| = \delta$, the number of quasiblocks of $A$ is $r_1 + r_2$, and the number of clusters of $(A,h)$ is $r_1$. Writing $N_s(\delta, r_1, r_2)$ for the number of small  $v$-templates of type $(\delta, r_1, r_2)$, we have the following estimate, the proof of which is identical to that of Proposition~\ref{ltemp-count}.

\begin{proposition}\label{stemp-count}
For non-negative integers $\delta$, $r_1$ and $r_2$, we have
\[N_s(\delta, r_1, r_2) = O(30 ^ \delta n^{2r_1} k ^ {6 r_2} / n^2). \eqno\qed\]
\end{proposition}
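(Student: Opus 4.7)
The plan is straightforward: I would repeat the proof of Proposition~\ref{ltemp-count} almost verbatim, since the core argument is insensitive to whether $\partial[-k,k]^2\subset A$ or $\partial[-k,k]^2\cap A=\emptyset$. First I would bound the number of choices for the edge set $A$ by applying Proposition~\ref{component} in the planar dual of the square lattice, using that $A$ is dual-connected and contains an edge incident to $(0,0)$; this gives at most $4(5e)^{\delta-1}\le 15^\delta$ choices just as before. Here, unlike in the large case, there is no lower bound of the form $\delta\ge 8k+4$ to worry about, so no case analysis on the size of $\delta$ is needed.

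Next I would bound the number of choices for $h$ given $A$. As in the large case, I would pick $r_1$ \emph{representative} quasiblocks --- one per cluster, arranging that the quasiblock containing $(0,0)$ is a representative --- at a cost of at most $\binom{r_1+r_2-1}{r_1-1}\le 2^\delta$. The rigidity of quasiblocks reduces the specification of $h$ to specifying one image per representative: the distinguished representative's image is pinned down by $h(0,0)=v$, each of the other $r_1-1$ representatives can be placed anywhere in $[n]^2$ contributing $n^{2(r_1-1)}$, and each of the $r_2$ non-representative quasiblocks is constrained to lie near the image of some representative sharing its cluster, contributing $O(k^6)$ placements each. Multiplying these bounds together is the same arithmetic as in Proposition~\ref{ltemp-count} and yields the stated $O(30^\delta n^{2r_1}k^{6r_2}/n^2)$ bound.

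The one point deserving care is the modified definition of ``cluster'' for small templates, in which the boundary quasiblock is joined to non-boundary quasiblocks via its \emph{internal} rather than external boundary. This is the step I would verify carefully, but I do not anticipate a real obstacle: the quantitative ingredient needed for the placement count is simply that any two quasiblock images in the same cluster are joined by a chain of square-lattice edges stepping between consecutive images, which is true under both definitions. The $(2k+1)^2$ diameter bound for a cluster, and hence the $O(k^6)$ placement count per non-representative quasiblock, therefore carry over unchanged, and so does the rest of the argument.
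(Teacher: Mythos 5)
Your proposal is correct and matches the paper exactly: the paper gives no separate argument for this proposition, stating only that the proof is identical to that of Proposition~\ref{ltemp-count}, which is precisely the verbatim repetition you carry out (including the one point genuinely worth checking, namely that the modified cluster graph for small templates still yields the $O(k^6)$ placement count per leftover quasiblock). The only tiny omission is that you should still dispose of the degenerate case $\delta < r_1 + r_2$, where $N_s = 0$ since the number of quasiblocks is at most $|A|$, before using $\binom{r_1+r_2-1}{r_1-1}\le 2^{\delta}$.
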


To estimate the probability that a small $v$-template $(A,h)$ is feasible for $\JJ(n,q)$, we will use the following.

\begin{proposition}\label{stemp-bound}
If $(A,h)$ is a small $v$-template of type $(\delta, r_1, r_2)$, then we have $\gamma(h) \ge \delta/2$ and $\gamma(h) \ge 2r_1 + r_2/2 + 1/2$.
\end{proposition}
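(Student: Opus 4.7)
The plan is to bound $\gamma(h)$ through the constraint graph $\C{G}_h$ using Proposition~\ref{gamma-bound}, which gives both $\gamma(h) \ge |E(\C{G}_h)|/2$ and $\gamma(h) \ge |V(\C{G}_h)|/2$; the two parts of the statement come from applying these two inequalities, respectively.

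The first bound $\gamma(h) \ge \delta/2$ is essentially immediate. Since $(A,h)$ is small, $A \cap \partial[-k,k]^2 = \emptyset$, so $A^* = A$ and $|A^*| = \delta$. By the template condition, every $A^*$-edge is $h$-split and thus contributes a distinct constraint in $\C{G}_h$; hence $|E(\C{G}_h)| \ge \delta$, and Proposition~\ref{gamma-bound} immediately gives $\gamma(h) \ge \delta/2$.

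For the second bound $\gamma(h) \ge 2r_1 + r_2/2 + 1/2$, the plan is to mimic the cluster-based vertex-counting argument of Proposition~\ref{ltemp-bound}. For each cluster $K$, one defines a set $T(K)$ of ambient edges guaranteed to lie in $V(\C{G}_h)$: for a non-boundary cluster, take $T(K) = S(K) \cup \partial_e K$ verbatim as in the large case; for the boundary cluster $K_0$, one needs a suitable analogue that captures both the external contribution from $\partial_e h(\hat X_0)$ and the internal contributions from $\partial_i h(\hat X_0)$ arising from $A^*$-edges incident to $\hat X_0$. The sets $T(K)$ are pairwise disjoint across distinct clusters by the very definition of the cluster graph, and every edge of $T(K)$ lies in $V(\C{G}_h)$; crucially, since $A \cap \partial[-k,k]^2 = \emptyset$, one does not need to subtract any $\partial[-k,k]^2$-contribution as was done in the large case. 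For each non-boundary cluster, Claim~\ref{jcurve} gives $|T(K)| \ge (m-1) + 4\sqrt{\sum_j |X_j|}$, and summing over clusters using the same convexity step as in the large case produces $|V(\C{G}_h)| \ge 4r_1 + r_2 + 1$, whence $\gamma(h) \ge |V(\C{G}_h)|/2 \ge 2r_1 + r_2/2 + 1/2$.

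The hard part will be the boundary cluster $K_0$. In contrast to non-boundary clusters, the boundary quasiblock $\hat X_0$ topologically encloses all the interior components of $D([-k,k]^2, A)$, and the $A^*$-edges incident to $\hat X_0$ give rise to constraints whose endpoints lie in $\partial_i h(\hat X_0)$ rather than $\partial_e h(\hat X_0)$. To handle this, I would apply the Jordan-curve/isoperimetric argument separately inside each hole of $h(\hat X_0)$, partitioning the $A^*$-edges at $\hat X_0$ by containing hole and carefully tracking the non-boundary quasiblocks of $K_0$ that have been placed in those holes (with the additional possibility that a given hole contains no quasiblock of $K_0$ at all). The $+1/2$ improvement over the large-case bound ultimately reflects the cleaner boundary behaviour enjoyed by small templates --- no $\partial[-k,k]^2$-edges in $A$ and no resulting $(4k+2)$ correction to subtract --- and emerges naturally from the cluster-by-cluster bound once the boundary cluster has been analyzed correctly.
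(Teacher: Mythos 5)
Your first bound is exactly the paper's argument and is fine. For the second bound, the overall scheme (disjoint sets $T(K)$ of split edges, one per cluster, followed by $\gamma(h)\ge |V(\C{G}_h)|/2$) is also the paper's, but you have deferred precisely the step on which the result hinges, and your sketched plan for it would not deliver the needed constant. The paper's accounting is: each of the $r_1-1$ non-boundary clusters contributes $|T(K)|\ge 4+(m-1)$ (only the trivial bound $|\partial_e K|\ge 4$ is used --- no Jordan-curve or convexity argument is needed here, and indeed the convexity step from the large case does not transfer, since the component sizes of the non-boundary clusters do not sum to $(2k+1)^2$), while the boundary cluster contributes $|T(K_0)|\ge 6+(m-1)$. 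Summing gives $|T|\ge 4(r_1-1)+6+(r_2-1)=4r_1+r_2+1$ and hence the $+1/2$. The constant $6$ for the boundary cluster is the crux: it comes from $|\partial_i h(\hat X_0)|\ge 6$, which is proved by noting that $X'=[-k,k]^2\setminus X_0$ satisfies $\partial_e X'\subset\partial_i X_0$ and that $|X'|\ge 2$ --- and $|X'|\ge 2$ is exactly where template condition (2) (that $A$ is not the four edges around a single vertex) enters. Your per-hole isoperimetric plan, as described, only guarantees $|\partial_e X'|\ge 4$ from non-emptiness of $X'$, which would yield $|T|\ge 4r_1+r_2-1$ and $\gamma(h)\ge 2r_1+r_2/2-1/2$; that loss of $1$ is fatal downstream, since the proof of Lemma~\ref{lnhd} needs the exponent $2r_1+r_2/2+1/2$ to beat the $n^{2r_1}$ factor in Proposition~\ref{stemp-count} by a margin of $n^{-1/2}$ beyond $n^{-2}$. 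Your attribution of the $+1/2$ to the absence of the $(4k+2)$ correction is therefore not the real source.

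A secondary error: you propose to include ``the external contribution from $\partial_e h(\hat X_0)$'' in $T(K_0)$. For a small template the edges of $\partial_e\hat X_0$ are edges of $\partial[-k,k]^2$, which are disjoint from $A$ and hence not $h$-split; the corresponding edges of $\partial_e h(\hat X_0)$ need not be vertices of $\C{G}_h$ and cannot be counted. Only the internal boundary $\partial_i h(\hat X_0)$ is usable for the boundary quasiblock, which is why the paper defines $T(K_0)=S(K_0)\cup\partial_i h(\hat X_0)$.
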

\begin{proof}
As before, we will estimate the size of both the vertex set and the edge set of the constraint graph $\C{G}_h$.
	
Since $\C{G}_h$ contains one edge for each $h$-split edge, it is easy to see that edge set of $\C{G}_h$ has size at least $A^*$. Since $\partial [-k,k]^2 \cap A = \emptyset$, we have $A^* = A$, so $ |E(\C{G}_h)| \ge|A|  = \delta$. We now conclude from Proposition~\ref{gamma-bound} that $\gamma(h) \ge |E(\C{G}_h)|/2 \ge \delta/2$.
	
To estimate the size of the vertex set of $\C{G}_h$, we begin with the following observations. First, if $\hat X$ is a non-boundary quasiblock of $A$, then since $\hat X$ is $h$-rigid, there is a one-to-one correspondence between $\partial_e \hat X$ and $\partial_e h(\hat X)$; since each edge of $\partial_e \hat X$ an element of $A$ (and consequently $h$-split, as $A = A^*$), it follows that each edge of $\partial_e h(\hat X)$ must belong to the vertex set of $\C{G}_h$. Next, if $\hat X$ is the boundary quasiblock of $A$, then since each edge of $\partial_i \hat X$ is an element of $A$, it follows that each edge of $\partial_i h(\hat X)$ must belong to the vertex set of $\C{G}_h$.
	
For a non-boundary cluster $K$ of $(A,h)$ composed of the images of the non-boundary quasiblocks $\hat X_1, \hat X_2, \dots, \hat X_m$, we write $S(K)$ for the set of edges between $\hat X_i$ and $\hat X_j$ for some $1 \le i < j \le m$ and $T(K)$ for the set $S(K) \cup \partial_e K$; it is clear that $S(K)$ and $\partial_e K$ are disjoint, so $T(K)$ is in fact the disjoint union of these sets. For the boundary cluster $K$ composed of the images of the boundary quasiblock $X$ and non-boundary quasiblocks $\hat X_1, \hat X_2, \dots, \hat X_m$, we write $S(K)$ for the set of edges between $\hat X_i$ and $\hat X_j$ for some $1 \le i < j \le m$ and $T(K)$ for the set $S(K) \cup \partial_i h(\hat X)$; again, it is clear that $S(K)$ and $\partial_i h(\hat X)$ are disjoint and that $T(K)$ is the disjoint union of these sets. Finally, it is also easy to see that if $K_1$ and $K_2$ are distinct clusters, then the sets $T(K_1)$ and $T(K_2)$ are disjoint. As before, let $T \subset \EDB([n]^2)$ denote the union of the sets $T(K)$, where $K$ runs over the $r_1$ clusters of $(A,h)$. From our earlier observations, it follows that each edge of $T$ is a vertex of $\C{G}_h$; consequently, we have $|V(\C{G}_h)| \ge |T|$.
	
To bound $|T|$ from below, we first deal with non-boundary clusters.
	
\begin{claim}\label{nbcluster}
For a non-boundary cluster $K$ of $(A,h)$ composed of the images of the non-boundary quasiblocks $\hat X_1, \hat X_2, \dots, \hat X_m$, we have
$|T(K)| \ge 4 + (m - 1)$.
\end{claim}
\begin{proof}
It immediately follows from the fact that $K$ corresponds to a connected component of size $m$ in the cluster graph of $(A,h)$ that $|S(K)| \ge m-1$. Since the external boundary of a non-empty subset of the square lattice contains at least four edges, it follows that  $|\partial_e K| \ge 4$. The claim follows since $S(K)$ and $\partial_e K$ are disjoint.
\end{proof}

Next, we have the following estimate for the boundary cluster.
\begin{claim}\label{bcluster}
If the boundary cluster $K$ of $(A,h)$ is composed of the images the boundary quasiblock $\hat X$ and non-boundary quasiblocks $\hat X_1, \hat X_2, \dots, \hat X_m$, then $|T(K)| \ge 6 + (m - 1)$.
\end{claim}
\begin{proof}
As before, it is clear that $|S(K)| \ge m-1$. We claim that $|\partial_i h(\hat X)| \ge 6$. To see this, consider the connected component $X$ of $D([-k,k]^2, A)$ containing the vertex boundary of $[-k,k]^2$. Writing $X' = [-k,k]^2 \setminus X$, note we must have $|X'| \ge 2$, for if not, then $A$ must consist of precisely the four edges incident either to $(0,0)$ or one of its four neighbours. Note also that $\partial_e X' \subset \partial_i X$. Now since $X'$ contains at least two vertices, it is easily verified that $\partial_e X'$ contains at least six edges; consequently $|\partial_i h(\hat X)| = |\partial_i \hat X| = |\partial_i X| \ge 6$. The claim follows since $S(K)$ and $\partial_i h(\hat X)$ are disjoint.
\end{proof}
	
By summing the bound from Claim~\ref{nbcluster} over the $r_1 - 1$ non-boundary clusters of $(A,h)$ and then adding the bound from Claim~\ref{bcluster}, we obtain
\[ |T| \ge 4(r_1-1) + 6 + (r_2 - 1)  = 4r_1 + r_ 2 + 1.\]
We know from Proposition~\ref{gamma-bound} that $\gamma(h) \ge |V(\C{G}_h)|/2 \ge |T|/2$, so it follows from the above bound that $\gamma(h) \ge 2r_1 + r_2/2 + 1/2$.
\end{proof}

\subsection{Proof of the main lemma}

We are now in a position to prove Lemma~\ref{lnhd}.

\begin{proof}[Proof of Lemma~\ref{lnhd}.]
We shall show for any $v \in [n]^2$, using a union bound over all $v$-templates, that the probability that there exists a $v$-template that is feasible for $\JJ = \JJ(n,q)$ is $o(n^{-2})$ when $q \ge \CC n$; the lemma then follows from a union bound over the elements of $[n]^2$.

Fix a vertex $v \in [n]^2$. Let $E_l$ denote the event that there exists a large $v$-template that is feasible for $\JJ$, and let $E_s$ denote the event that there exists a small $v$-template that is feasible for $\JJ$.

First, we bound $\P(E_l)$ as follows. Consider the event $E_l (\delta, r_1, r_2)$ that there exists a large $v$-template of type $(\delta, r_1, r_2)$ that is feasible for $\JJ$. Of course, Proposition~\ref{ltemp-count} implies that $\P (E_l (\delta, r_1, r_2)) = 0$ either if $\delta < 8k + 4$ or if $\delta < r_1 + r_2$. Otherwise, from Propositions~\ref{ltemp-count} and~\ref{ltemp-bound} and the fact that $k = \lceil \log n \rceil$, we see that
\begin{align*}
\P (E_l (\delta, r_1, r_2)) &= O\left(\frac{30 ^ \delta n^{2r_1} k ^ {6 r_2} }{n^2 100^{\delta} n^{2r_1 + r_2/2 - 2r_1/(2k+1)}} \right) = O\left(\frac{30^\delta n^{2r_1/(2k+1)}  }{n^2 100^{\delta}} \right)\\
&= O\left(\frac{ e^{r_1}  }{n^2 3^{\delta}}\right) = O\left(\frac{ e^{\delta}  }{n^2 3^{\delta}}\right) = O\left(\frac{ (e/3)^{\log n}  }{n^2}\right) = O\left(n^{-1 - \log 3}\right).
\end{align*}
Now, since $1 + \log 3 > 2$ and $k = \lceil \log n \rceil$, we deduce from the above estimate that
\[ \P(E_l) = \sum_{\delta = 1}^{4(2k+1)^2}\sum_{r_1 = 1}^{(2k+1)^2} \sum_{r_2 = 1}^{(2k+1)^2} \P (E_l (\delta, r_1, r_2)) = O\left(4(2k+1)^6 n^{-1 - \log 3}\right) =  o(n^{-2}).\]

Next, we bound $\P(E_s)$ as follows. Consider the event $E_s (\delta, r_1, r_2)$ that there exists a small $v$-template of type $(\delta, r_1, r_2)$ that is feasible for $\JJ$. From Propositions~\ref{stemp-count} and~\ref{stemp-bound} and the fact that $k = \lceil \log n \rceil$, we see that
\[
\P (E_s (\delta, r_1, r_2)) = O\left(\frac{30 ^ \delta n^{2r_1} k ^ {6 r_2} }{n^2 10^{20\delta} n^{2r_1 + r_2/2 + 1/2}} \right) = O\left(\frac{30^\delta  }{n^{2+1/2} 100^{\delta}} \right) = O\left(n^{-2 - 1/2}\right).
\]
As before, since $k = \lceil \log n \rceil$, we deduce from the above estimate that
\[
\P (E_s) = \sum_{\delta = 1}^{4(2k+1)^2}\sum_{r_1 = 1}^{(2k+1)^2} \sum_{r_2 = 1}^{(2k+1)^2} \P (E_s (\delta, r_1, r_2)) =  O\left(4(2k+1)^6 n^{-2 - 1/2}\right) =  o(n^{-2}).\]

It follows that the probability that there exists $v$-template that is feasible for $\JJ$ is $o(n^{-2})$; the lemma follows from a union bound over the vertices of the grid.
\end{proof}

\section{Proof of the $1$-statement}\label{s:upper}
In this section, we prove the $1$-statement in Theorem~\ref{reconthm}. We proceed roughly as in~\citep{feige} by first assembling the `central bulk' of a random jigsaw using Lemma~\ref{lnhd}, and then extending this assembly to the `periphery' in a fairly straightforward fashion; our arguments will however require a bit more work than the one in~\citep{feige} since we have fewer colours to work with.

\begin{proof}[Proof of the $1$-statement in Theorem~\ref{reconthm}]
Suppose that $q \ge \CC n$, let $\JJ = \JJ(n,q)$ and, as in Section~\ref{s:nhd}, let $k = \lceil \log n\rceil$. To prove the $1$-statement, we shall describe an algorithm that reconstructs $\JJ$ from its deck $\DD(\JJ)$ with high probability.

We begin by addressing the possibility of tiles occurring with multiplicity greater than one in $\DD(\JJ)$. Let $X_1$ denote the number of pairs $(u, v) \in ([n]^2)^2$ with $\JJ_u = \JJ_v$, and let $X_2$ denote the number of pairs $(u, v) \in ([n]^2)^2$ with $\JJ_u = \JJ_v$ such that $u$ and $v$ are additionally at distance at most two from each other. We then observe the following.

\begin{claim}\label{doubles}
	$\E[X_1] \le 1 $ and $\E[X_2] = o(1)$.
\end{claim}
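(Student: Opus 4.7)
The plan is to reduce both bounds to the single calculation $\P(\JJ_u = \JJ_v) = q^{-4}$ for any distinct $u, v \in [n]^2$, after which the two estimates follow by summing over an appropriate index set of pairs. (I will assume pairs are counted with $u \ne v$, since the claim would otherwise fail by $n^2$ trivial diagonal contributions.)

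The key pointwise computation is the following. The tiles $\JJ_u$ and $\JJ_v$ are each determined by four edges; together they involve either seven distinct edges (if $u$ and $v$ are adjacent, so share one edge) or eight (otherwise). In the non-adjacent case the edges are completely independent, so requiring the four coordinates of $\JJ_u$ to match those of $\JJ_v$ imposes four independent equations on independent uniform $[q]$-variables, each of probability $1/q$, and the claimed value $q^{-4}$ is immediate. In the adjacent case, say with $v = u + e_1$, the shared edge $\{u,v\}$ appears in position $1$ of $\JJ_u$ and position $3$ of $\JJ_v$; writing out the four equations $\JJ_u[i] = \JJ_v[i]$ one sees that they constrain seven i.i.d.\ uniform edge colours, and the set of solutions has exactly three free parameters (one for the shared edge colour, one each for two of the remaining pairs), so the probability is $q^3/q^7 = q^{-4}$. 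Thus in both cases $\P(\JJ_u = \JJ_v) = q^{-4}$.

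Given this, linearity of expectation yields
\[
\E[X_1] = \sum_{u \ne v} \P(\JJ_u = \JJ_v) \le n^2(n^2-1) q^{-4} \le \frac{n^4}{q^4} \le \frac{1}{\CC^4} \le 1,
\]
since $q \ge \CC n$. For $\E[X_2]$, the only change is that the sum is restricted to pairs at graph-distance at most two, of which each vertex $u$ has at most $4 + 8 = 12$ (four at distance one and eight at distance two in $\Z^2$); hence
\[
\E[X_2] \le 12 n^2 \cdot q^{-4} \le \frac{12}{\CC^4 n^2} = o(1).
\]

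No step here looks hard; the only point that needs any care is verifying independence of the four tile-coordinate equalities in the adjacent case, where the shared edge appears in two positions. This is cleanest to handle not by a conditioning argument but by the direct enumeration of solutions in $[q]^7$ sketched above, which also makes it plain that the probability is exactly (not just at most) $q^{-4}$.
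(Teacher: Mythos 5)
Your proof is correct and follows essentially the same route as the paper, which simply records $\E[X_1]=n^4q^{-4}$ and $\E[X_2]=O(n^2q^{-4})$ and invokes $q\ge \CC n$; your careful verification that $\P(\JJ_u=\JJ_v)=q^{-4}$ even when $u$ and $v$ share an edge (and your note that the diagonal pairs must be excluded) just makes explicit what the paper leaves implicit.
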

\begin{proof}
	The claim follows immediately from noting that $\E[X_1] = n^4q^{-4}$ and that $\E[X_2] = O(n^2 q^{-4})$.
\end{proof}



Let us now record some properties that are possessed by $\JJ$ with high probability.

\begin{enumerate}[(A)]
\item\label{window} There exist no non-trivial $v$-windows with respect to $\JJ$ for any $v \in [n]^2$;  this follows from Lemma~\ref{lnhd}.
\item\label{mul2} The number of vertices $v \in [n]^2$ such that the tile $\JJ_v$ has multiplicity greater than one in $\DD(\JJ)$ is at most $\log n$; this follows from Claim~\ref{doubles} and Markov's inequality.
\item\label{mul2close} If $\JJ_u = \JJ_v$ for some $u, v \in [n]^2$	, then the distance between $u$ and $v$ is at least three; this again follows from Claim~\ref{doubles} and Markov's inequality.
\end{enumerate}

We first show how one may reconstruct a large subgrid of $\JJ$ from $\DD(\JJ)$ with high probability; we do this by showing how one may perform this reconstruction assuming that $\JJ$ satisfies~\eqref{window},~\eqref{mul2} and~\eqref{mul2close}. To this end, we proceed by building a labelled, directed graph $H$ on $\DD(\JJ)$ to encode the relative positions of the tiles in the jigsaw. In what follows, a component of the directed graph $H$ will mean a connected component of the underlying undirected graph.

First, we consider every tile $t \in \DD(\JJ)$ which occurs in the deck with multiplicity one. For such a tile $t$, we consider all possible subsets of $(2k+1)^2$ tiles that include $t$, and for each such set, we consider all possible arrangements of this set of tiles on the grid $[-k,k]^2$ with $t$ being placed at $(0,0)$. Finally, for each such arrangement that is feasible, we record the tuple $(t,t_1, t_2, t_3, t_4)$, where $t_i$ is the tile placed at $e_i$ in this arrangement for $1 \le i \le 4$. Now, for each recorded tuple $(t, t_1, t_2, t_3, t_4)$, we add an edge directed from $t$ to $t_i$ labelled $e_i$ in $H$ if the tile $t_i$ also occurs with multiplicity one in the deck.

It follows from~\eqref{window} that if there exists a directed edge from a tile $t$ to a tile $t'$ labelled $e_i$ in $H$, then it must be the case that $t = \JJ_v$ and $t' =\JJ_{v'}$, where $v, v' \in[n]^2$ are vertices such that $v' = v + e_i$. Consequently, each component of $H$ describes the relative positions of the tiles in that component in $\JJ$; in other words, for any two tiles $t = \JJ_v$ and $t' = \JJ_{v'}$ that belong to the same component in $H$, we may determine $v - v'$ using $H$.

From~\eqref{mul2close} and Propostion~\ref{p:3sep}, we deduce that the tiles of $\JJ$ coming from the central $(n-2k) \times (n-2k)$ subgrid of $[n]^2$ which furthermore appear with multiplicity one in $\DD(\JJ)$ all belong to the same component of $H$; it follows from~\eqref{mul2} that this component contains at least $(n-2k)^2 - \log n > n^2 / 2$ tiles, and is consequently the unique largest component of $H$.

Next, we fill in the `holes' in the largest component of $H$ as follows. We know that we may determine, up to translation, the positions on the square lattice of all the tiles in a given component of $H$; we fix an arrangement of the tiles in the largest component by placing one of these tiles at the origin and the other tiles at their appropriate positions relative to the origin. Suppose that there is no tile at some position $x \in \Z^2$ in this arrangement, but that there is a tile at each of the four positions neighbouring $x$. Now, the tiles in the positions neighbouring $x$ uniquely determine the missing tile at $x$, and since all pairs of adjacent tiles in $H$ come from adjacent positions in $[n]^2$, it follows that such a missing tile must be an isolated vertex of $H$. Once we add each such missing tile to the largest component of $H$ (by adding in the appropriately labelled directed edges), it follows from~\eqref{mul2close} that the largest component of $H$ contains each tile of $\JJ$ coming from the central $(n-2k-2) \times (n-2k-2)$ subgrid of $[n]^2$. Let $S_H$ denote the largest square subgrid contained in the largest connected component of $H$ at this juncture; we know from the above discussion that with high probability, $S_H$ is a fully-assembled $s \times s$ subgrid of $\JJ$ with $s\ge n-2k - 2$.

We now finish the proof by showing that $\JJ$ has the following property with high probability: given any fully-assembled $m \times m$ subgrid $M$ of $\JJ$ with $m \ge n-2k-2$, there is a unique way to assemble the tiles not in $M$ around $M$ to produce a feasible assembly of tiles on an $n\times n$ grid; of course, this final assembly of tiles must then coincide with $\JJ$.

Let us now describe an extension procedure that, with high probability, extends a given large fully-assembled subgrid $M$ uniquely to $\JJ$ using the tiles not in $M$. This extension procedure will proceed by repeatedly extending $M$, first upwards, then downwards, then to the left and finally to the right, adding an entire row or column of tiles at each step (thus ensuring that $M$ remains a subgrid at each stage). Suppose first that we wish to add a row of tiles to the top of $M$. Let $M'$ denote the set of tiles $t$ in the top row of $M$ not located at one of the two corners,  and let $M''$ denote the set of two tiles at the top corners of $M$. For each $t \in M'$, we record all triples $(t', t'_l, t'_r)$ of tiles from the deck (not already in $M$) such that we may feasibly place $t'$ above $t$, $t'_l$ to the immediate left of $t'$, and $t'_r$ to the immediate right of $t'$. We then proceed as follows.
\begin{enumerate}
\item If no such feasible triple of tiles $(t', t'_l, t'_r)$ exists for some tile $t \in M'$, then we stop attempting to extend $M$ upwards and change directions.
\item If there exist two distinct choices for the tile $t'$ over all recorded feasible triples $(t', t'_l, t'_r)$ for some tile $t \in M'$, then we abort.
\item If there exists a single choice for $t'$ (though potentially more than one choice for $t'_l$ and $t'_r$) over all recorded feasible triples for each tile $t \in M'$, then we add a new row of tiles to the top of $M$ by first placing $t'$ above $t$ for each tile $t \in M'$. We then check if there exists a unique way to place two tiles (that are not already in $M$) feasibly above the two tiles in $M''$, and if so, we finish adding a new row to the top of $M$ by placing these two tiles in place; if we either cannot find such a pair of tiles, or if multiple choices exist for this pair, then we again abort.
\end{enumerate}
Assuming that we have not aborted at any stage, we then continue to add rows to the top of $M$ until we are forced to change directions, and we then similarly extend $M$ downwards, to the left and finally to the right.

To bound the probability that this extension procedure fails to uniquely reconstruct $\JJ$ from some large fully-assembled subgrid, we need to define two events. It will be convenient to first have some notation. Let $B_1 \subset [n]^2$ denote the set of vertices not contained in the central $(n - 4k - 4) \times (n-4k-4)$ subgrid of $[n]^2$, and let $B_2 \subset B_1$ denote the set of vertices in the four $(2k+2) \times (2k+2)$ subgrids at the four corners of $B_1$.

We first address the possibility of `failing in a corner' when extending a large subgrid. Let $E_1$ denote the event that there exists a pair $(u,v)$ with $u \in B_2$ and $v \in B_1$ such that some two edges incident to $u$ receive the same two colours  under $\JJ$ as  some two edges incident to $v$. We then have the following estimate.

\begin{claim}\label{corner}
$\P(E_1) = o(1)$.
\end{claim}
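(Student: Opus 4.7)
The plan is to establish Claim~\ref{corner} by a straightforward union bound over pairs $(u,v) \in B_2 \times B_1$ and over choices of two edges at each vertex, using that $q \ge \CC n$ while $|B_2|$ and $|B_1|$ are small.

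First I would record the sizes $|B_2| = O(k^2)$ and $|B_1| = O(nk)$, which follow directly from the definitions ($B_2$ is the union of four $(2k+2) \times (2k+2)$ squares, while $B_1$ is contained in a union of four strips of width $2k+2$ in the $n \times n$ grid). For a fixed pair $(u, v) \in B_2 \times B_1$ with $u \ne v$, there are $\binom{4}{2} = 6$ ways to select two of the four edges incident to $u$ and $6$ ways to select two edges incident to $v$, so at most $36$ configurations to consider. For each configuration, I want the (multi)set of two colours at $u$ to equal the (multi)set of two colours at $v$.

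The main case to treat is when $u$ and $v$ are not adjacent, so that all four edges under consideration are distinct and receive independent uniform colours in $[q]$. In this case, for any fixed matching of the two edges at $u$ with the two edges at $v$, the probability of agreement is $q^{-2}$, and there are at most two such matchings, so the probability contributed by this configuration is at most $2/q^2$. Summing over configurations and pairs gives a contribution of $O(|B_2|\cdot |B_1| \cdot 36 \cdot q^{-2}) = O(nk^3/q^2)$. When $q \ge \CC n$, this is $O(k^3/n) = O((\log n)^3/n) = o(1)$.

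The only subtle case is when $u$ and $v$ are adjacent (sharing one edge), since the chosen edges may overlap and so the matching event is not a simple independence calculation. However, each $u \in B_2$ has at most four neighbours in $B_1$, so there are only $O(k^2)$ such pairs; bounding the matching probability for each by $O(1/q)$ (trivially, since at most one colour needs to match anything) still yields a contribution of $O(k^2/q) = o(1)$, which is easily absorbed. I expect this adjacent case to be the only mildly fiddly step, but it is ultimately negligible because $|B_2|$ is so much smaller than $|B_1|$; the nonadjacent union bound is the dominant estimate and the claim follows.
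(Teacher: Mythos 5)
Your proposal is correct and follows essentially the same route as the paper: the paper's proof is exactly this first-moment computation, recording $\E[Y_1] = O(nk^3q^{-2} + k^2q^{-1})$, where the first term is your generic (disjoint-edge) contribution and the second is your degenerate overlapping-edge contribution, followed by Markov's inequality. Your treatment of the adjacent case is the same correction the paper's second term accounts for, so nothing is missing.
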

\begin{proof}
Let $Y_1$ denote the number of pairs $(u,v)$ which satisfy the conditions of the event $E_1$. It is easy to see that
\[
\E[Y_1] = O(nk^3 q^{-2} + k^2q^{-1}) = o(1);
\]
the claim follows from Markov's inequality.
\end{proof}

Next, we address the possibility of `failing in the bulk of a row or column' when extending a large subgrid. Let $E_2$ denote the event that there exists a quadruple $(u,v,v', v'')$, where $u, v, v', v'' \in B_1$ and $u$ is not one of the four corners of $[n]^2$, such that either $v \ne u + e_1$ and the map $f\colon [-1,1] \times [0,1] \to [n]^2$ defined by $f(-1,0) = u+e_4$, $f(0,0) = u$, $f(1,0) = u+e_2$, $f(-1,1) = v'$, $f(0,1) = v$ and $f(1,1) = v'')$ is feasible for $\JJ$, or such that the quadruple satisfies an analogous condition with respect to one of the three other directions. We then have the following estimate.

\begin{claim}\label{middle}
	$\P(E_2) = o(1)$.
\end{claim}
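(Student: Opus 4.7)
The plan is to mimic the proof of Claim~\ref{corner}: define $Y_2$ to be the number of quadruples $(u,v,v',v'')$ that witness the event $E_2$, show that $\E[Y_2]=o(1)$ by a direct calculation, and then conclude via Markov's inequality. By the complete symmetry between the four directions in the definition of $E_2$, I only need to analyse one direction (say, the upward one) and multiply the resulting bound by $4$.

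For a fixed upward quadruple $(u,v,v',v'')$ with $u,v,v',v''\in B_1$, $u$ not a corner of $[n]^2$, and $v\neq u+e_1$, the requirement that the stated map $f$ be feasible for $\JJ$ translates into five colour equations on the edges of the extended grid: three vertical matchings between the north edges of $u+e_4,u,u+e_2$ and the south edges of $v',v,v''$ respectively, together with two horizontal matchings (east of $v'$ with west of $v$, and east of $v$ with west of $v''$). In the generic case, where none of the ten participating edges of the grid coincide, the five equations are independent and so hold simultaneously with probability exactly $q^{-5}$. Since $|B_1|=O(nk)$ and $q\geq \CC n$, the generic contribution to $\E[Y_2]$ is at most $O(|B_1|^4 q^{-5}) = O(k^4/n) = o(1)$, which I expect to constitute the main term.

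The step requiring care is the bookkeeping for the degenerate configurations in which two of the relevant edges coincide as edges of the grid and thereby trivialise one of the five colour equations --- for example, $v'=u+e_4+e_1$ collapses the first vertical constraint, and $v=v'+e_2$ collapses the first horizontal one. The key observation is that each such coincidence simultaneously pins one of the free positions $v,v',v''$ in terms of the others, stripping a factor of $|B_1|=O(nk)$ from the counting while saving only a factor of $q=\Theta(n)$ in the probability; since $k=\lceil\log n\rceil \ll n$, every degenerate regime is in fact strictly smaller than the generic estimate by at least a factor of $k$. The hypothesis $v\neq u+e_1$ built into $E_2$ simply removes the one configuration in which $v$ is the correct tile above $u$, which represents a legitimate extension rather than a failure. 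Summing the constantly many degenerate types together with the generic term, and then over the four directions, still yields $\E[Y_2]=O(k^4/n)=o(1)$, whence Markov's inequality gives $\P(E_2)=o(1)$.
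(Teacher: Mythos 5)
Your proposal is correct and follows essentially the same route as the paper: a first-moment bound on the number of witnessing quadruples followed by Markov's inequality, with the expectation computed by separating the generic case (ten distinct edges, five independent constraints, probability $q^{-5}$) from the degenerate cases where edge coincidences both trivialise constraints and pin positions. Your observation that each coincidence trades a factor of $|B_1|=O(nk)$ in the count for a factor of $q=\Theta(n)$ in the probability is exactly what produces the paper's stated bound $\E[Y_2]=O((nk)^4q^{-5}+(nk)^3q^{-4}+(nk)^2q^{-3})=o(1)$.
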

\begin{proof}
Let $Y_2$ denote the number of quadruples $(u,v, v',v'')$ which satisfy the conditions of the event $E_b$. We may verify (after a somewhat tedious case analysis) that
\[
\E[Y_2] = O((nk)^4q^{-5} + (nk)^3q^{-4} + (nk)^2q^{-3}) = o(1);
\]
the claim follows from Markov's inequality.
\end{proof}

If neither $E_1$ nor $E_2$ occurs, then it is easily seen by induction that our extension procedure extends any fully assembled $m \times m$ subgrid $M$ with $m \ge n - 2k -2 $ uniquely to $\JJ$ using the tiles not already in $M$. It follows that we may extend $S_H$ uniquely to $\JJ$ with high probability, proving the theorem.
\end{proof}
\section{Conclusion}\label{s:conc}
We conclude by reminding the reader of Conjecture~\ref{mainconj} which asserts that the answer to the question of whether $\JJ(n,q)$ is reconstructible exhibits a sharp transition at $q \approx n / \sqrt{e}$. Here, we have established the $0$-statement in Conjecture~\ref{mainconj} using a simple counting argument. We have also proved the $1$-statement in this conjecture for all $q \ge Cn$, where $C >0$ is some absolute constant. As mentioned earlier, it is possible to use our methods to show that we may actually take $C$ as above to be any constant strictly greater than $1$: roughly speaking, our estimates for the number of templates in the `small edge boundary' regime are very crude, and it is possible to do significantly better in this regime using stability results (see~\citep{ellis}, for example) for the isoperimetric inequality in $\Z^2$. However, showing that we may actually take $C$ as above to be any constant strictly greater than $1/\sqrt{e}$ appears to be completely out of the reach of our methods; we expect new ideas will be required to settle this problem.

Of course, one could also ask for the size of the window in the sharp transition predicted by Conjecture~\ref{mainconj}. By repeating the proof of the $0$-statement of Theorem~\ref{mainconj} with more careful estimates, we are led to the following refinement of Conjecture~\ref{mainconj} whose $0$-statement again follows from our counting argument.
\begin{conjecture}\label{refinement}
Let $q= q(n) = n/\sqrt{e} + \log n + \alpha(n)$. As $n \to \infty$, we have
\[
\P \left( \JJ(n,q) \text{ is reconstructible} \right) \to
\begin{cases}
1 &\mbox{if } \alpha(n) \to \infty, \text{ and}\\
0 &\mbox{if } \alpha(n) \to -\infty.\\
\end{cases}
\]
\end{conjecture}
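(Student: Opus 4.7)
The conjecture splits naturally into a 0-statement ($\alpha\to-\infty$ implies $\P \to 0$) and a 1-statement ($\alpha\to\infty$ implies $\P \to 1$), which require substantially different treatments. The 0-statement is, as the authors indicate in Section~\ref{s:conc}, an elaboration of the double-counting in Section~\ref{s:lower}; the 1-statement, by contrast, is a far sharper refinement of the 1-statement of Theorem~\ref{reconthm}, and the authors themselves flag it as lying beyond the reach of the template machinery even after optimisation.

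For the 0-statement, my plan is to sharpen the upper bound
\[
\P(\JJ(n,q) \text{ is reconstructible}) \le \binom{n^2+q^4-1}{n^2} q^{-2n(n+1)}
\]
— ideally by replacing $|\C{D}(n,q)|$ by a smaller upper bound on $|\text{image}(\DD)|$ that reflects global consistency conditions missed by the crude multiset count — and then to expand its logarithm for $q = (n/\sqrt{e})(1+\beta)$ with $\beta = \sqrt{e}(\log n + \alpha)/n \to 0$. The goal is to carry Stirling's formula to the first order at which $\alpha$ appears with a non-vanishing coefficient, and to verify that this coefficient is strictly negative so that the exponent tends to $-\infty$ precisely when $\alpha \to -\infty$. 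The delicate point is that the raw bound as written in Section~\ref{s:lower} appears to have the requisite slack only for $\alpha$ growing faster than $\log n$; closing the $\log n$ gap to the conjecture's threshold is exactly where a refined estimate on $|\text{image}(\DD)|$ (or a more careful accounting of the sub-leading Stirling terms together with the $n^4/(2q^4)$ correction) must do the work. The computation itself is essentially a bookkeeping exercise once the right bound is in place.

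For the 1-statement, I would retain the two-phase strategy of Sections~\ref{s:nhd}--\ref{s:upper} — reconstruct every $(2k+1)\times(2k+1)$ neighbourhood via templates and then extend — but replace Proposition~\ref{p:iso} by a stability version of the discrete isoperimetric inequality drawn from~\citep{ellis}, gaining much tighter control over templates whose quasiblocks are nearly rectangular, since these are exactly the templates that dominate the count when $q \approx n/\sqrt{e}$. In parallel, Lemma~\ref{lnhd} would need to be upgraded from a union bound into a second-moment or Janson-type estimate, because at the conjectural threshold one expects $\Theta(1)$ potential exchange configurations in total across the grid, rather than $o(1)$. The main obstacle is precisely this: the per-vertex union bound of Section~\ref{s:nhd} demands feasibility probability $o(n^{-2})$ per candidate template, but at $q \approx n/\sqrt{e} + \log n$ local exchange patterns occur in constant expectation and cannot be ruled out individually. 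Ruling out their \emph{coherent} assembly into a competing global jigsaw would require a genuinely global argument — plausibly a correlation-decay or resampling analysis exploiting the rigidity of the full $n \times n$ grid — and I do not see a concrete route to implementing this last step, which is presumably why the authors leave the statement as a conjecture.
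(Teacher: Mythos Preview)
The statement in question is a \emph{conjecture}; the paper does not prove it. What the paper does say (in Section~\ref{s:conc}) is that the $0$-statement ``again follows from our counting argument,'' leaving the $1$-statement entirely open. Your proposal recognises exactly this division, so in that sense your assessment agrees with the paper's own.

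For the $0$-statement, your plan --- tighten the bound $\P(\text{reconstructible}) \le |\C D(n,q)|/|\C J(n,q)|$ and expand via Stirling around $q=n/\sqrt e$ --- is the same route the paper alludes to. Your caution about a residual $\log n$-scale discrepancy is well placed: if one carries the expansion through, the leading terms of the log-bound are
\[
2n^2\log(q/n)+n^2-2n\log q + \tfrac{n^4}{2q^4} + O(\log n),
\]
and setting $q=n/\sqrt e+\gamma$ yields a threshold at $\gamma\approx(\log n)/\sqrt e$, not $\gamma=\log n$. So either the paper has a sharper estimate on $|\mathrm{image}(\DD)|$ in mind (your suggestion), or the constant in front of $\log n$ in the conjecture is not meant to be taken literally; in either case the paper gives no further details, and your proposed refinement is a sensible way to try to close the gap.

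For the $1$-statement, you are right that it lies beyond the template machinery of Sections~\ref{s:nhd}--\ref{s:upper}: the paper says explicitly that pushing $C$ below $1$ ``appears to be completely out of the reach of our methods.'' Your diagnosis --- that at $q\approx n/\sqrt e$ local exchange configurations occur in $\Theta(1)$ expectation, so a per-vertex first-moment bound cannot succeed and a genuinely global argument is needed --- is exactly the obstruction. The paper offers no proof here, and neither do you; that is the honest state of affairs.
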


Finally, it would be of interest to investigate higher-dimensional analogues of the problem considered here. For example, it would be interesting to decide if the analogous $d$-dimensional problem of reconstructing a random $q$-colouring of (the edges of) $[n]^d$ from its deck exhibits a sharp threshold at $q \approx n/ e^{1/d}$ for each $d\ge3$.
\section*{Acknowledgements}
The first and second authors were partially supported by NSF grant DMS-1600742, and the second author also wishes to acknowledge support from EU MULTIPLEX grant 317532. Some of the research in this paper was carried out while the first author was visiting the Isaac Newton Institute for Mathematical Sciences at the University of Cambridge; the first author is grateful for the hospitality of the Institute.

\bibliographystyle{amsplain}
\bibliography{jigsaw_recon}

\providecommand{\bysame}{\leavevmode\hbox to3em{\hrulefill}\thinspace}
\providecommand{\MR}{\relax\ifhmode\unskip\space\fi MR }
\providecommand{\MRhref}[2]{%
  \href{http://www.ams.org/mathscinet-getitem?mr=#1}{#2}
}
\providecommand{\href}[2]{#2}
\begin{thebibliography}{10}

\bibitem{sets}
N.~Alon, Y.~Caro, I.~Krasikov, and Y.~Roditty, \emph{Combinatorial
  reconstruction problems}, J. Combin. Theory Ser. B \textbf{47} (1989),
  153--161.

\bibitem{Bela}
B.~Bollob{\'a}s, \emph{Almost every graph has reconstruction number three}, J.
  Graph Theory \textbf{14} (1990), 1--4.

\bibitem{coffee}
\bysame, \emph{The art of mathematics: Coffee time in memphis}, Cambridge
  University Press, Cambridge, 2006.

\bibitem{feige}
C.~Bordenave, U.~Feige, and E.~Mossel, \emph{Shotgun assembly of random jigsaw
  puzzles}, Preprint, arXiv:1605.03086.

\bibitem{ellis}
D.~Ellis, E.~Friedgut, G.~Kindler, and A.~Yehudayoff, \emph{Geometric stability
  via information theory}, Discrete Anal. (2016), Paper No. 10, 29.

\bibitem{harary}
F.~Harary, \emph{On the reconstruction of a graph from a collection of
  subgraphs}, Theory of {G}raphs and its {A}pplications ({P}roc. {S}ympos.
  {S}molenice, 1963), Publ. House Czechoslovak Acad. Sci., Prague, 1964,
  pp.~47--52.

\bibitem{kelly}
P.~J. Kelly, \emph{A congruence theorem for trees}, Pacific J. Math. \textbf{7}
  (1957), 961--968.

\bibitem{mart}
A.~Martinsson, \emph{A linear threshold for uniqueness of solutions to random
  jigsaw puzzles}, Preprint, arXiv:1701.04813.

\bibitem{mossel}
E.~Mossel and N.~Ross, \emph{Shotgun assembly of labeled graphs}, Preprint,
  arXiv:1504.07682.

\bibitem{steger}
R.~Nenadov, P.~Pfister, and A.~Steger, \emph{Unique reconstruction threshold
  for random jigsaw puzzles}, Preprint, arXiv:1605.03043.

\bibitem{plane}
L.~Pebody, A.~J. Radcliffe, and A.~D. Scott, \emph{Finite subsets of the plane
  are 18-reconstructible}, SIAM J. Discrete Math. \textbf{16} (2003), 262--275.

\bibitem{groups}
Luke Pebody, \emph{The reconstructibility of finite abelian groups}, Combin.
  Probab. Comput. \textbf{13} (2004), 867--892.

\bibitem{necklaces}
\bysame, \emph{Reconstructing odd necklaces}, Combin. Probab. Comput.
  \textbf{16} (2007), 503--514.

\bibitem{Alex}
A.~J. Radcliffe and A.~D. Scott, \emph{Reconstructing subsets of {$Z_n$}}, J.
  Combin. Theory Ser. A \textbf{83} (1998), 169--187.

\bibitem{ulam}
S.~M. Ulam, \emph{A collection of mathematical problems}, Interscience Tracts
  in Pure and Applied Mathematics, Interscience Publishers, New York-London,
  1960.

\end{thebibliography}

\end{document}